\begin{document}

\title*{Lectures on BCOV Holomorphic Anomaly Equations}
\author{Atsushi Kanazawa and Jie Zhou}
\institute{Atsushi Kanazawa \at Department of Mathematics, University of Tokyo,
3-8-1 Komaba, Meguro-ku, Tokyo, 153-8914, Japan, \email{kanazawa@10.alumni.u-tokyo.ac.jp}
\and Jie Zhou \at Department of Mathematics, Harvard University, One Oxford Street, MA 02138, USA, \email{jiezhou@math.harvard.edu}}
%
%

\maketitle

\abstract{
The present article surveys some mathematical aspects of the BCOV holomorphic anomaly equations introduced by Bershadsky, Cecotti, Ooguri and Vafa \cite{BCOV,BCOV2}.
It grew from a series of lectures the authors gave at the Fields Institute in the Thematic Program of Calabi--Yau Varieties in the fall of 2013.}

\section{Introduction} \label{Introduction}

The present article is a gentle introduction to some mathematical aspects of the BCOV holomorphic anomaly equations \cite{BCOV,BCOV2},
which represent a beautiful generalization of the classical $g=0$ mirror symmetry \cite{CdOGP}.
The classical $g=0$ mirror symmetry states that counting the rational curves in a Calabi--Yau threefold $X^\vee$ (A-model)
is equivalent to studying the variation of Hodge structures of its mirror Calabi--Yau threefold $X$ (B-model).
Higher genus mirror symmetry is concerned with counting the higher genus curves in a Calabi--Yau threefold.
While Gromov--Witten theory rigorously defines a mathematical theory of counting curves of any genus and thus higher genus A-model makes sense at all genera,
the higher genus B-model, a generalization of the theory of variation of Hodge structures, has been much more mysterious. \\

A candidate of the higher genus B-model was provided by Bershadsky, Cecotti, Ooguri and Vafa in the seminal papers \cite{BCOV,BCOV2} (BCOV theory).
Among other things, they derived a set of equations, now called the BCOV holomorphic anomaly equations.
The importance of these equations lies in the fact that they describe the anti-holomorphicity of the topological string amplitudes and, moreover,
recursively relate the genus $g$ topological string amplitude $\mathcal{F}_g$ to those of lower genera.
The new feature of higher genus mirror symmetry is that the theory is no longer governed by holomorphic objects
but by a mixture of holomorphic and anti-holomorphic objects in the controlled manner.
In fact, although the classical mirror symmetry can be understood in the context of variation of Hodge structures,
it seem that the BCOV theory cannot easily be captured by present mathematics. \\

Our primary goal is to give a soft introduction to the BCOV holomorphic anomaly equations and related topics,
about which many references are currently scattered throughout  journals.
We try to make our exposition as simple and motivating as possible, keeping in mind that they should be understandable by non-experts.
The choice of topics covered in this article is very limited and also influenced by the authors' taste.
The subject is very vivid and likely to get into new developments in the next few years,
and we hope that this article serves as an entry point for non-experts to learn the subject.\\

The layout of this article is as follows.
Section \ref{Special Kahler Geometry} is a brief summary of special K\"ahler geometry of the moduli space of complex structures of a Calabi--Yau threefold.
Special K\"ahler geometry is the basic language to formulate mirror symmetry.
Section \ref{Mirror Symmetry} is an overview of mirror symmetry from points of view of both physics and mathematics.
The key feature of higher genus $(g\ge 1)$ mirror symmetry is the presence of holomorphic anomaly.
In the BCOV theory, the holomorphic anomaly is controlled by the BCOV holomorphic anomaly equations.
Sections \ref{BCOV HAE} and \ref{Holomorphic Limit} explain the BCOV holomorphic anomaly equations and holomorphic limit respectively,
with a particular emphasis on the similarity with the theory of elliptic curves.
We close this article by providing some examples in Section \ref{Examples}.

\subsection*{Acknowledgement}
This article grew from a series of lectures the authors gave at the Fields Institute in the Thematic Program of Calabi--Yau Varieties in the fall of 2013.
It is a great pleasure to record our thanks to all the people who attended the lectures.
Among many others, we would like to express our particular gratitude to N. Yui for her hospitality during the program and her suggestion to write up this set of notes.
The authors thank M. Alim, H. Fuji, S. Hosono, M. Miura, E. Scheidegger and S.-T. Yau for very helpful discussions on the subject.
We are also grateful to A. Zinger for correspondence about the recent progress on $g=1$ mirror symmetry.



\section{Special K\"ahler Geometry} \label{Special Kahler Geometry}
In this section, we give a brief summary of the basics of special K\"ahler geometry that we need throughout this article.
Special K\"ahler geometry is a basic computational tool used in the calculations in mirror symmetry.
This section also serves to set conventions and notations.
Standard references are \cite{Str, BCOV, Fre}.

\subsection{Special Coordinates and Prepotential} \label{special coordinates}
Let $\mathcal{M}$ be the moduli space of complex structures of a smooth Calabi--Yau threefold $X$ of dimension $n:=\dim \mathcal{M}=h^{2,1}(X)$.
The vector bundle $\mathcal{H}:=R^3 \pi_* \underline{\mathbb{C}}\otimes \mathcal{O}_{\mathcal{M}}$ of rank $2n+2$ 
comes equipped with the Gauss--Manin connection $\nabla$ and  the natural Hodge filtration $F^\bullet$ of weight 3.
The Hodge filtration $F^\bullet$ yields the smooth decomposition
$$
\mathcal{H}=\mathcal{H}^{3,0}\oplus \mathcal{H}^{2,1}\oplus \mathcal{H}^{1,2} \oplus \mathcal{H}^{0,3},
$$
where $\mathcal{H}^{p,q}:=F^p\mathcal{H} \cap \overline{F^{q}\mathcal{H}}$.
The holomorphic line bundle $\mathcal{L}:=F^3 \mathcal{H}=\mathcal{H}^{3,0}$ is called the vacuum bundle.
We also fix a reference point $[X]\in \mathcal{M}$ and smoothly identify\footnote{
We take a universal covering of $\mathcal{M}$ if necessary but most of what follows works in a local setting.} the fibers of $\mathcal{H}$ with $H^3(X,\mathbb{C})$.
We endow $H^3(X,\mathbb{C})$ with the symplectic pairing $(\alpha, \beta):= \sqrt{-1} \int_{X} \alpha \cup \beta$.
Then the period domain ${\mathcal D}$ is defined by
\[
{\mathcal D}:=\left\{ [\omega] \in \mathbb{P}(H^3(X,\mathbb{C})) \ | \
(\omega, \omega )=0, (\omega, \overline \omega ) >0 \right\}.
\]
The period map ${\mathcal P}: \mathcal{M} \rightarrow {\mathcal D}$ assigns to $z=[X_z] \in \mathcal{M}$
the line $\mathcal{L}_z \subset H^3(X_z,\mathbb{C}) \cong H^3(X,\mathbb{C})$.
More concretely, by fixing a symplectic basis $\{ \alpha_I, \beta^J \}_{I,J =0}^n$ of $H^3(X,\mathbb{Z})$ and its dual basis $\{ A^I, B_J \}_{I,J =0}^n$ of $H_3(X,\mathbb{Z})$,
the period map ${\mathcal P}$ is written in terms of a section $\Omega=\{\Omega_z\}_{z\in \mathcal{M}}$ 
of the vacuum bundle $\mathcal{L}$ as\footnote{We use the Einstein summation convention.}
$$
\mathcal{P}(z) := \phi^I(z) \alpha_I + F_J(z) \beta^J,
$$
where $\phi^I(z) :=\int_{A^I} \Omega_z$ and $F_J(z) :=\int_{B_J} \Omega_z$.

\begin{proposition} \label{Special coord and prepotential}
With the notation above, the following hold:
\begin{enumerate}
\item The map $z \mapsto [\phi^0(z), \cdots, \phi^n(z)] \in \mathbb{P}^n$ is locally bi-holomorphic,
i.e. $\{\phi^I\}_{I=0}^n$ locally form homogeneous coordinates of the moduli space $\mathcal{M}$ around $z$.
\item Locally there exists a function $F(\phi)$ such that $F_J(z) = \frac{\partial F(\phi)}{\partial \phi^J}$ for $0 \le J \le n$.
\item $F(\phi)$ is holomorphic and  homogeneous of degree 2 in the variables $\phi$.
In particular $F(\phi) \in \Gamma(\mathcal{M},\mathcal{L}^2)$.
\end{enumerate}
\end{proposition}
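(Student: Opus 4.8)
The plan is to treat the three assertions in turn, the decisive geometric input being Griffiths transversality together with the fact that, for a Calabi--Yau threefold, the Kodaira--Spencer map $T_z\mathcal{M}\to H^{2,1}(X_z)$ is an isomorphism (infinitesimal Torelli, a consequence of Bogomolov--Tian--Todorov unobstructedness). Fix local holomorphic coordinates $z^1,\dots,z^n$ on $\mathcal{M}$. Since the basis $\{\alpha_I,\beta^J\}$ and its dual $\{A^I,B_J\}$ are flat for $\nabla$, the periods $\phi^I,F_J$ are honest holomorphic functions and $\partial_{z^i}\phi^I=\int_{A^I}\nabla_i\Omega$, $\partial_{z^i}F_J=\int_{B_J}\nabla_i\Omega$. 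Griffiths transversality gives $\nabla_i\Omega\in F^2\mathcal{H}$, and the Kodaira--Spencer isomorphism says that the classes $[\nabla_i\Omega]$ form a basis of $F^2\mathcal{H}/F^3\mathcal{H}=\mathcal{H}^{2,1}$; as $\Omega$ spans $F^3\mathcal{H}$, the $n+1$ sections $\Omega,\nabla_1\Omega,\dots,\nabla_n\Omega$ form a frame for $F^2\mathcal{H}$.

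For (1), I would show that the differential of $z\mapsto[\phi^0:\cdots:\phi^n]$ is injective, which amounts to the linear independence in $\mathbb{C}^{n+1}$ of the $A$-period vectors of $\Omega$ and of the $\nabla_i\Omega$. Under the symplectic pairing the $A$-periods of an element of $\mathcal{H}$ recover its $\alpha$-coefficients, so this is equivalent to the isomorphism property of the projection $F^2\mathcal{H}\to\mathbb{C}^{n+1}$, $\omega\mapsto(\int_{A^I}\omega)_I$, whose kernel is $F^2\mathcal{H}\cap\mathrm{span}(\beta^J)$. Because $F^2\mathcal{H}$ (by type reasons on a threefold) and $\mathrm{span}(\beta^J)$ are both Lagrangian of dimension $n+1$ for $(\cdot,\cdot)$, this intersection is generically trivial; it is in particular trivial near the reference point after adjusting the symplectic basis, and triviality is an open condition. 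On that locus the differential is an isomorphism of equidimensional manifolds, giving the local biholomorphism.

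For (2) and (3), the engine is the isotropy of the Lagrangian $F^2\mathcal{H}$, which yields $(\Omega,\nabla_i\Omega)=0$ and $(\nabla_i\Omega,\nabla_j\Omega)=0$. Expanding these in the flat basis with the normalization $\int_X\alpha_I\cup\beta^J=\delta_I^J$ produces $\phi^I\,\partial_iF_I=F_J\,\partial_i\phi^J$ and the symmetry $\sum_I(\partial_i\phi^I)(\partial_jF_I)=\sum_I(\partial_j\phi^I)(\partial_iF_I)$. Regarding $F_I$ as a function of the homogeneous coordinates $\phi$ (legitimate by (1)) and writing $\partial_iF_I=(\partial F_I/\partial\phi^K)\partial_i\phi^K$, the first relation combined with the degree-$1$ homogeneity of $\phi^I,F_J$ (from $\Omega\mapsto\lambda\Omega$) and the spanning of $\mathbb{C}^{n+1}$ by $\phi,\partial_1\phi,\dots,\partial_n\phi$ forces $\phi^I\,\partial F_I/\partial\phi^K=F_K$; the second relation then says $G_{IK}:=\partial F_I/\partial\phi^K-\partial F_K/\partial\phi^I$ contracts to zero on every pair $\partial_i\phi,\partial_j\phi$. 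Since the same two inputs give $G\phi=0$, the antisymmetric tensor $G$ annihilates all pairs from the spanning set $\{\phi,\partial_i\phi\}$, so $G\equiv0$. Closedness of $F_J\,d\phi^J$ then yields a local potential $F(\phi)$ with $F_J=\partial F/\partial\phi^J$, proving (2); it is holomorphic because the periods are. Finally, since $\partial F/\partial\phi^J=F_J$ is homogeneous of degree $1$, integrating along rays gives $F(\lambda\phi)=\lambda^2 F(\phi)$, so $F$ is homogeneous of degree $2$ and transforms as a section of $\mathcal{L}^2$ under $\Omega\mapsto\lambda\Omega$, proving (3).

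The main obstacle is (1): assertions (2)--(3) are formal consequences of the Lagrangian nature of $F^2\mathcal{H}$ and of scaling, whereas (1) rests on the genuine geometric inputs---infinitesimal Torelli for the Kodaira--Spencer isomorphism, and the non-degeneracy of the $A$-period projection---which is exactly why the conclusion is only local and may require adapting the symplectic basis to the reference point (e.g. the large complex structure regime).
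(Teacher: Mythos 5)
Your proposal is correct, and for parts (2)--(3) it is in substance the paper's own argument: everything is driven by Griffiths transversality and the isotropy of the Lagrangian subbundle $F^2\mathcal{H}$ under the symplectic pairing, with the Euler relation giving homogeneity. The difference is one of execution and of scope. The paper, having part (1) in hand, differentiates directly in the homogeneous coordinates, writing $\omega_I=\nabla_{\phi^I}\Omega=\alpha_I+\frac{\partial F_J}{\partial\phi^I}\beta^J$, so that $\int_X\omega_I\cup\omega_J=0$ yields the symmetry $\frac{\partial F_J}{\partial\phi^I}=\frac{\partial F_I}{\partial\phi^J}$ in one line, and $\int_X\Omega\cup\omega_I=0$ yields the Euler relation $\phi^J\nabla_JF_I=F_I$ directly; your version in arbitrary coordinates $z^i$ needs the extra spanning argument with the antisymmetric tensor $G_{IK}$ and the set $\{\phi,\partial_i\phi\}$, which is equivalent but more laborious (note also that once you have $\phi^I\partial F_I/\partial\phi^K=F_K$ you can skip the ray integration and take $F=\frac{1}{2}\phi^JF_J$ as an explicitly homogeneous potential, avoiding the integration-constant issue). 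The more substantive divergence is part (1): the paper does not prove it at all, deferring to Bryant--Griffiths \cite{BG}, whereas you sketch the actual argument --- infinitesimal Torelli via the Kodaira--Spencer isomorphism making $\Omega,\nabla_1\Omega,\dots,\nabla_n\Omega$ a frame of $F^2\mathcal{H}$, plus transversality of the Lagrangian $F^2\mathcal{H}$ to $\mathrm{span}(\beta^J)$ so that the $A$-period projection is an isomorphism. Your caveat that this transversality may require adapting the symplectic basis near the reference point is an honest refinement rather than a defect: as literally stated for an arbitrary fixed basis the transversality can fail, and this is exactly the point the paper's citation absorbs. So your write-up buys a self-contained treatment of (1) at the cost of a slightly heavier computation in (2)--(3).
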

\begin{proof}
We will show the second and third assertions and refer the reader to \cite{BG} for a proof the first assertion.
The following identity is useful in the computation below:
$$
\int_X \Omega_1 \cup \Omega_2=\int_{A^I}\Omega_1 \int_{B_I}\Omega_2-\int_{A^I}\Omega_2 \int_{B_I}\Omega_1
$$
for $\Omega_i \in H^3(X,\mathbb{C}) \ (i=1,2)$.
By the property of the Gauss--Manin connection, $\nabla_I:=\nabla_{\phi_I}$, we have
$$
\omega_I:=\nabla_I \Omega=\alpha_I+\frac{\partial F_J}{\partial \phi_I}\beta^J.
$$
Moreover, the Griffith transversality implies that $\{\omega^I\}_{I=0}^n$ form a basis of $\mathcal{H}^{3,0}_z\oplus \mathcal{H}^{2,1}_z$.
Then the relation $\int_X \omega_I \cup \omega_J=0$ yields
$$
\frac{\partial F_J}{\partial\phi^I}=\int_{B^I}\omega_J=\int_{B^J}\omega_I=\frac{\partial F_I}{\partial\phi^J},
$$
which shows there locally exists a function $F(\phi)$ such that $F_I=\frac{\partial F}{\partial \phi^I}$.
The function $F_I$ is linear because
$$
0=\int_X \Omega \cup \omega_I =\phi^J\nabla_JF_I - F_I.
$$
Therefore we conclude that $F(\phi)$ is homogeneous of degree 2 in $\phi$, a section of $\mathcal{L}^{2}$.
\qed
\end{proof}
The above local coordinates $\{\phi^i/\phi^0\}_{i=1}^n$ are often called special coordinates on the moduli space $\mathcal{M}$.
They are an example of canonical coordinates around a large complex structure limit (Section \ref{Kahler normal coordinates}) and play an important role in mirror symmetry.


\subsection{Special K\"ahler Manifolds}\label{Special Kahler Manifolds}

\begin{definition}
A Hodge manifold $M$ is a compact K\"ahler manifold with a Hermitian line bundle $(L,\langle *,**\rangle)$
such that a K\"ahler potential $K$ is given by $K=-\log \big\lVert\Omega\big\lVert$ where $\Omega$ is a local holomorphic section of $L$.
\end{definition}

Given a Hodge manifold $M$ with local coordinates $\{z_i\}_{i=1}^n$, the K\"ahler metric $G_{i\bar j}$, 
Christoffel symbols $\Gamma_{ij}^k$ and the curvature $R_{k i \bar j}^l$ are respectively given by
$$
G_{i\bar j}:=\partial_{i} \partial_{\bar j} K, \ \ \ \Gamma_{ij}^k:=G^{k\bar k}\partial_i G_{j\bar k},  \ \ \ R_{k i \bar j}^l:=-\partial_{\bar j}\Gamma_{ik}^l,
$$
where $(G^{i\bar j})$ is the inverse of the metric $(G_{i\bar j})$.

\begin{definition}
A special K\"ahler manifold $M$ is a Hodge manifold satisfying the following conditions:
\begin{enumerate}
\item Let $H$ be the vector bundle defined by
$$
H:=L\oplus (L\otimes TM)\oplus \overline{L\otimes TM} \oplus \bar L.
$$
There exists a connection $\mathbb{D}:\Gamma(M,H)\rightarrow \Gamma(M,H)\otimes \varOmega_{M}$ of the form
\begin{align}
\mathbb{D}_i \xi_0            & =\partial_i \xi_0               +\partial_i K \xi_0  +                                         C_{i0}^A \xi_A \notag \\
\mathbb{D}_i \xi_j               & =\partial_i \xi_j                +\partial_i K \xi_j   - \Gamma_{ij}^k\xi_k  + C_{ij}^A \xi_A \notag \\
\mathbb{D}_i \xi_{\bar j} & =\partial_i \xi_{\bar j}    +                                                                           C_{i\bar j}^A \xi_A \notag \\
\mathbb{D}_i \xi_{\bar 0} & =\partial_i \xi_{\bar 0} +                                                                          C_{i\bar 0}^A \xi_A, \notag
\end{align}
for a section $(\xi_A):=(\xi_0,\xi_j,\xi_{\bar j},\xi_{\bar0}) \in \Gamma(M,H)$.
We have a similar equations for $\mathbb{D}_{\bar i} \xi_A$.
\item $\mathbb{D}$ is flat: $[\mathbb{D}_i,\mathbb{D}_j]=[\mathbb{D}_{\bar i},\mathbb{D}_{\bar j}]=[\mathbb{D}_i,\mathbb{D}_{\bar j}]=0$.
\end{enumerate}
\end{definition}
Let $C_i:=(C_{i,A}^B)$ and write $\mathbb{D}_i=D_i+C_i$.
The condition $[\mathbb{D}_i,\mathbb{D}_j]=0$ implies that there exists a section $F \in \Gamma(M,L^{2})$ such that
$$
C_{ijk}:=-D_iD_jD_k F\,,
$$
and that
$$
C_i=\begin{bmatrix}
        0  &  -\delta_i^j  & 0  & 0 \\
        0  &  0  &  e^KG^{k\bar k}C_{ijk}  &   0\\
        0  &  0   &  0  &  -G_{i\bar j} \\
        0  &  0  &  0  &  0
        \end{bmatrix}.
$$
The quantity $C_{ijk}$ is often called the B-model Yukawa coupling (Section \ref{Mirror Symmetry}).
We obtain a similar form for $C_{\bar i}$ from the condition $[\mathbb{D}_{\bar i},\mathbb{D}_{\bar j}]=0$.
The last condition $[\mathbb{D}_i,\mathbb{D}_{\bar j}]=0$ leads to the following:
\begin{equation}R_{i\bar j k \bar l}=G_{i \bar j}G_{k \bar l}+ G_{i \bar l}G_{k \bar j}-e^{2K}C_{ikm}C_{\bar j \bar l \bar n} G^{m \bar n}. \label{Special Kahler geometry relation}
\end{equation}
This relation is called the special K\"ahler geometry relation. \\

The most important example for us of a special K\"ahler manifold is the moduli space $\mathcal{M}$ of complex structures of a smooth Calabi--Yau threefold $X$.
We define a Hermitian metric, called the the $tt^*$-metric, $\langle*,** \rangle $ on $\mathcal{H}$ by
$$
\langle \xi, \eta \rangle:=-\int_X C(\xi) \cup \bar \eta,
$$
where $C$ is the Weil operator.
The Hodge manifold structure on $\mathcal{M}$ is given by the K\"ahler potential
$$
K(z,\bar z):= - \log \big\lVert\Omega\big\lVert=-\log \sqrt{-1}\int_X \Omega \cup \overline{\Omega},
$$
where $\Omega$ is a local holomorphic section of the vacuum bundle $\mathcal{L}$.
The induced K\"ahler metric is called  the Weil--Petersson metric in this case.
Moreover, we have a canonical isomorphism
$$
\mathcal{H} \cong \mathcal{L}\oplus (\mathcal{L}\otimes T\mathcal{M})\oplus \overline{\mathcal{L}\otimes T\mathcal{M}} \oplus \overline{\mathcal{L}}.
$$
because the fiber of the RHS over $[X] \in \mathcal{M}$ is naturally identified with
$$
H^{3,0}(X)\oplus H^{2,1}(X)\oplus H^{1,2}(X)\oplus H^{0,3}(X),
$$
where we used the Kodaira--Spencer map for $H^{2,1}(X) \cong (\mathcal{L}\otimes T\mathcal{M})|_{[X]}$.
The vector bundle $\mathcal{H}$ admits the Gauss--Manin connection, which is flat and satisfies the Griffith transversality condition. \\

It is instructive to show how the above data endows $M$ with a special K\"ahler structure.
The Kodaira--Spencer map gives rise to a homomorphism
$$
C:T\mathcal{M} \longrightarrow \oplus_{p=3}^0 \mathrm{Hom}(\mathcal{H}^{p,3-p},\mathcal{H}^{p-1,4-p}).
$$
We define $C_i:=C(\frac{\partial}{\partial z_i})$ for local coordinates $\{z^i\}_{i=1}^n$ of $\mathcal{M}$.
We also define $D_i:=D(\frac{\partial}{\partial z_i})$, where $D$ the $(1,0)$-component of the covariant derivative with respect to the $tt^*$-metric ($tt^*$-connection).
The notations $\overline{C}_{\bar i}$ and $\overline{D}_{\bar i}$ are defined in a similar manner.
It is a good exercise to check that
$$
\nabla^{1,0}=D+C, \ \ \ \nabla^{0,1}=\overline{D}+\overline{C},
$$
where $\nabla^{1,0}$ is the $(1,0)$-component of the Gauss--Manin connection $\nabla$ and similar for $ \nabla^{0,1}$.

\begin{proposition}[Cecotti--Vafa \cite{CV2}]
The $tt^*$-connection and the matrix $C$ satisfy the following set of equations, called the $tt^*$-equations.
\begin{align}
[D_i,D_j]=[\overline{D}_{\bar i},\overline{D}_{\bar j}]=0, \ \ & \ [D_i,\overline{C}_{\bar j}]=[\overline{D}_{\bar i},C_j]=0 .\notag \\
[D_i,C_j]=[D_j,C_i], \ \ \ [\overline{D}_{\bar i},\overline{C}_{\bar j}]=&[\overline{D}_{\bar j},\overline{C}_{\bar i}], \ \ \
[D_i,\overline{D}_{\bar j}]=-[C_i,\overline{C}_{\bar j}]. \notag
\end{align}
\end{proposition}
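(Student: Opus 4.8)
The plan is to obtain all of the $tt^*$-equations from a single structural input that the excerpt has already assembled: the Gauss--Manin connection $\nabla$ is flat, and it decomposes as $\nabla^{1,0}=D+C$ and $\nabla^{0,1}=\overline{D}+\overline{C}$. Flatness, $\nabla^2=0$, sorts by bidegree of forms on $\mathcal{M}$ into exactly three vanishing-curvature conditions, $[\nabla_i,\nabla_j]=0$, $[\nabla_{\bar i},\nabla_{\bar j}]=0$ and $[\nabla_i,\nabla_{\bar j}]=0$. First I would substitute the decomposition into each of these and then extract the individual $tt^*$-equations by a \emph{second} grading argument, this time with respect to the Hodge grading $\mathcal{H}=\bigoplus_p \mathcal{H}^{p,3-p}$ on the fibers.

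The observation that drives the proof is how the four operators act on this Hodge grading. The $tt^*$-connection pieces $D_i$ and $\overline{D}_{\bar j}$ are \emph{grading-preserving}, being the components of the Chern connection of the $tt^*$-metric adapted to the orthogonal Hodge decomposition. By Griffith transversality, the Kodaira--Spencer piece $C_i$ shifts the Hodge degree by exactly one step, and its conjugate $\overline{C}_{\bar j}$ shifts by one step in the opposite direction. Consequently, when a commutator is expanded, its summands split into operators shifting the Hodge grading by $0$, $\pm 1$ or $\pm 2$; because operators of distinct shift map into distinct Hodge summands, they are linearly independent, so each homogeneous piece of a vanishing commutator must vanish separately.

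With this in hand the extraction is pure bookkeeping. From $[\nabla_i,\nabla_j]=[D_i+C_i,D_j+C_j]=0$, the grading-preserving part gives $[D_i,D_j]=0$, the one-step part gives $[D_i,C_j]+[C_i,D_j]=0$, i.e. $[D_i,C_j]=[D_j,C_i]$, and the two-step part $[C_i,C_j]=0$ holds automatically (consistent with the total symmetry of $C_{ijk}$). Conjugating yields $[\overline{D}_{\bar i},\overline{D}_{\bar j}]=0$ and $[\overline{D}_{\bar i},\overline{C}_{\bar j}]=[\overline{D}_{\bar j},\overline{C}_{\bar i}]$. Finally, expanding $[\nabla_i,\nabla_{\bar j}]=[D_i+C_i,\overline{D}_{\bar j}+\overline{C}_{\bar j}]=0$ and using that $C_i$ and $\overline{C}_{\bar j}$ shift in opposite directions, the two one-step parts separate into $[D_i,\overline{C}_{\bar j}]=0$ and $[\overline{D}_{\bar i},C_j]=0$, while the grading-preserving part collects as $[D_i,\overline{D}_{\bar j}]+[C_i,\overline{C}_{\bar j}]=0$, which is exactly $[D_i,\overline{D}_{\bar j}]=-[C_i,\overline{C}_{\bar j}]$.

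The main obstacle is not the bookkeeping but rigorously justifying the grading decomposition itself, namely that $D$ is \emph{exactly} grading-preserving and that $C$ carries \emph{no} component shifting the Hodge degree by two or more. The first point rests on compatibility of the $tt^*$-metric with the Hodge decomposition, which forces the Chern connection to be block-diagonal with respect to the $\mathcal{H}^{p,3-p}$; the second is Griffith transversality, $\nabla F^p\subset F^{p-1}\otimes\varOmega_{\mathcal{M}}$, which guarantees that $C$ drops the filtration level by precisely one. Once these two facts are pinned down, the independence of the graded pieces is immediate and the $tt^*$-equations all drop out.
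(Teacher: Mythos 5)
Your proof is correct, and it takes a genuinely more self-contained route than the paper's, which is essentially a sketch-plus-citation: the paper derives only $[D_i,D_j]=[\overline{D}_{\bar i},\overline{D}_{\bar j}]=0$, from the fact that the curvature of the $tt^*$-connection (a Chern-type metric connection) is of type $(1,1)$, and then refers to \cite{CV2,BCOV,HKKPTVVZ} for ``a detailed study of the Gauss--Manin connection'' covering the remaining equations. Your argument is exactly that detailed study carried out uniformly: flatness of $\nabla$, sorted first by form bidegree on $\mathcal{M}$ and then by the Hodge shift on the fibers of $\mathcal{H}$, with $D,\overline{D}$ preserving $\oplus_{p}\mathcal{H}^{p,3-p}$ and $C_i$, $\overline{C}_{\bar j}$ shifting $p$ by $-1$ and $+1$ respectively. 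The two structural inputs you isolate are the right ones and are both available in the excerpt: the block-diagonality of the $tt^*$-connection (the paper notes it is induced from the metric connections on $\mathcal{L}$ and $T\mathcal{M}$, hence respects the decomposition of $\mathcal{H}$), and the decomposition $\nabla^{1,0}=D+C$, $\nabla^{0,1}=\overline{D}+\overline{C}$ together with Griffiths transversality, which confines $C$ and $\overline{C}$ to a single-step shift; note that for the one-step claim you also implicitly use that $\nabla^{1,0}$ preserves the conjugate filtration $\overline{F^{q}}$ (the conjugate of holomorphicity of $F^{p}$), which is what rules out a component of $C_i$ raising $p$. The difference between the two routes is mild but real: where you obtain $[D_i,D_j]=0$ as the shift-zero piece of $[\nabla_i,\nabla_j]=0$, the paper invokes the $(1,1)$-type of the Chern curvature, a fact valid for any Hermitian holomorphic bundle independently of the flat Gauss--Manin structure; what your version buys is a single mechanism yielding all the identities at once, including the Higgs-field relation $[C_i,C_j]=0$ --- which, one small correction, is not ``automatic'' but is itself an output of the same grading argument (equivalently, it reflects the total symmetry of $C_{ijk}$), a harmless point since that relation is not among the equations being proved.
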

\begin{proof}
The relations $[D_i,D_j]=[\overline{D}_{\bar i},\overline{D}_{\bar j}]=0$ follows from the fact that the curvature of the $tt^*$-connection is of type $(1,1)$.
The rest of the equations follows from a detailed study of the Gauss--Manin connection.
We refer the reader to \cite{CV2,BCOV, HKKPTVVZ} for a proof.
\qed
\end{proof}
The $tt^*$-equations are equivalent to the existence of a family of flat connections on $\mathcal{H}$ of the form:
$$
\nabla^\alpha=D+\alpha C, \ \ \ \overline{\nabla}^\alpha=D+\alpha^{-1}C
$$
for an arbitrary constant $\alpha \in \mathbb{C}$.
For $\alpha=1,$ we recover the Gauss--Manin connection.
In this situation, the section $F \in \Gamma(\mathcal{M},\mathcal{L}^2)$ is the one obtained in Proposition \ref{Special coord and prepotential}.\\

Let $e_0$ be a local section of the vacuum bundle $\mathcal{L}$, then $\{e_i:=C_ie_0\}_{i=1}^n$ form a local frame of $\mathcal{H}^{2,1}$.
Therefore, their complex conjugates $\{\bar e_{\bar i}\}_{\bar{i}=0}^n$ form a local frame of $\mathcal{H}^{1,2}\oplus \mathcal{H}^{0,3}$.
We denote by $g_{i \bar j}:=\langle e_i, \bar{e}_{\bar j}\rangle$ the $tt^*$-metric with respect to this frame. 
It is worth noting that the $tt^*$-connection is nothing but the induced connection from the connection on $\mathcal{L}$ by the Hermitian metric $e^{-K}$
and the connection on $T\mathcal{M}$ by the Weil--Petersson metric.
In fact, the Weil--Petersson metric is related to the $tt^*$ metric by
$$
G_{i \bar j}=\partial_i \overline{\partial}_{\bar j} (-\log g_{0\bar0})=\frac{g_{i \bar j}}{g_{0\bar0}}, \ \ \ (1\le i,j \le n). 
$$
Now the special K\"ahler geometry relation in (\ref{Special Kahler geometry relation}) follows from
a direct computation of the $tt^*$-equations in terms of the local frame $\{e_i,\bar e_{\bar{i}} \}_{i=0}^n$. 
First, since $\{e_{\bar{i}} \}_{i=1}^n$ form a local frame of $\mathcal{H}^{1,2}$, we can write $C_ie_j=C_{ij}^{\bar k}e_{\bar k}$. 
We also have
$$
[D_i,\overline{D}_{\bar j}]e_0=G_{i \bar j}e_0, \ \ \
[C_i,\overline{C}_{\bar j}]e_0=-\overline{C}_{\bar j}e_i,
$$
and thus $\overline{C}_{\bar j}e_i=G_{i \bar j}e_0$.
Next, we have
\begin{align}
[D_i,\overline{D}_{\bar j}]e_k&=-\partial_{\bar j}(g^{\bar m l}\partial_i g_{k \bar m})=(R^l_{ki\bar j} + G_{i\bar j}\delta^l_k)e_l, \notag \\
[C_i,\overline{C}_{\bar j}]e_k&= C_i(G_{k \bar j }e_0) -C_{\bar j} C_{ik}^{\bar m} \bar e_{\bar m}=G_{k\bar j}e_i - C_{ik}^{\bar m}C_{j \bar m}^l e_l, \notag
\end{align}
and thus the special K\"ahler relation in (\ref{Special Kahler geometry relation}).
Here raising and lowering indices are given by the metric $(g_{i \bar j})_{i,j =1}^n$.
For example, we define a quantity
$$
C^{ij}_{\bar k}:=C_{\bar i \bar j \bar k}g^{i\bar i}g^{j\bar j}=e^{2K} C_{\bar i \bar j \bar k} G^{i\bar i} G^{j \bar j},
$$
which will appear in the BCOV holomorphic anomaly equations.

\section{Mirror Symmetry} \label{Mirror Symmetry}
Since its discovery, mirror symmetry has played one of the central roles in the interface between superstring theory and mathematics.
It originates from representations of the $N=2$ superconformal algebra and studies the interplay between two different combinations of chiral states in the left- and right-moving sectors.
Mirror symmetry in mathematics comes from a realization of the the $N=2$ superconformal fields theory as a non-linear $\sigma$-model on a Calabi--Yau threefold.
The process of building a mathematical foundation of mirror symmetry has given impetus to new fields in mathematics,
such as Gromov--Witten theory, quantum cohomology and Fukaya category \cite{CK,HKKPTVVZ}.

\subsection{Gromov--Witten Potentials}
Gromov--Witten theory lays a mathematical foundation of a curve counting theory.
For a Calabi--Yau threefold $X^\vee$, we define the genus $g$ Gromov--Witten invariant $N_{g}(\beta)$ of $X^\vee$ in the curve class $\beta \in H_{2}(X^\vee,\mathbb{Z})$ by
$$
N_{g}(\beta):=\int_{[\overline{M}_{g}(X^\vee,\beta)]^{vir}}1,
$$
Here $[\overline{M}_{g}(X^\vee,\beta)]^{vir}$ is the virtual fundamental class of the coarse moduli space of stable maps $\overline{M}_{g}(X^\vee,\beta)$
of the expected dimension, which is $0$ for a Calabi--Yau threefold.
Let $\{T_1,\dots,T_{h^{1,1}}\}$ be a basis of $H^2(X^\vee,\mathbb{Z})$.
Then the genus $g$ Gromov--Witten potential ${\tt F}_g(t)$ of $X^\vee$ is defined by
\begin{eqnarray}\label{AmodelFg}
{\tt F}_0(t)&:=&\frac{1}{6}\int_{X^\vee}(t^iT_i)^3+\frac{1}{24}  \int_{X^\vee}c_2(X^\vee) \cup t^iT_i-\frac{\chi(X^\vee)}{(2 \pi i)^3}\zeta(3)+\sum_{\beta \ne 0}N_0(\beta)q^\beta ,
\notag \\
{\tt F}_1(t)&:=&-\frac{1}{24}  \int_{X^\vee}c_2(X^\vee) \cup t^iT_i+ \sum_{\beta \ne 0}N_1(\beta)q^\beta ,
\notag \\
{\tt F}_g(t)&:=&\frac{\chi(X^\vee)}{2}(-1)^g\frac{|B_{2g}B_{2g-2}|}{2g(2g-2)(2g-2!)}
+\sum_{\beta \ne 0}N_g(\beta)q^\beta \ \ \ (g\ge 2).
\end{eqnarray}
where $B_k$ is the $k$-th Bernoulli number and $q:=e^{2\pi \sqrt{-1}t^i T_i}$ with the K\"ahler parameters $\{t^i\}_{i=1}^{h^{1,1}}$.
The constant term above represents the Gromov--Witten invariant $N_g(0)$ of degree 0,
the contribution from the constant maps\footnote{
We have $N_g(0)=\int_{\overline{M}_{g}\times X^\vee}c_{top}(Ob)=(-1)^g\frac{\chi(X^\vee)}{2}\int_{\overline{M}_{g}}c_{g-1}^3(\mathcal{H}_g)$,
where $Ob\rightarrow \overline{M}_{g,0}(X,0)\cong \overline{M}_{g}\times X^\vee$ is the obstruction bundle and $\mathcal{H}_g \rightarrow \overline{M}_{g}$ is the Hodge bundle \cite{FP}.}.
An important observation from superstring theory is that we should not consider each invariant $N_{g}(\beta)$ individually,
but consider them all together as a generating series.


\subsection{Mirror Symmetry in Physics}
In this section we will give an overview of the physical origin of mirror symmetry.
This section is independent of other sections and can be skipped depending on the reader's background.
The exposition is based on \cite{Wit,BCOV, CK, HKKPTVVZ, Ali}. \\

We begin with a review of the $N=2$ superconformal field theory (SCFT).
One feature of the conformal field theory is that a field $\Phi(z,\bar z)$ factorizes into
the left- and right-moving part : $\Phi(z,\bar z)=\phi(z) \bar \phi(\bar z)$.
Therefore we obtain two copies of the $N=2$ conformal algebra and this is often referred to as the $N=(2,2)$ superconformal algebra.
More precisely, the $N=2$ SCFT consists of two conjugate left and right supersymmetries $G^\pm$ and $\overline{G}^\pm$,
and two $U(1)$ currents $J$ and $\overline{J}$.
Among the important commutation relations, we have
$$
(G^\pm)^2=0, \ \ \ \{G^+,G^-\}=2H_L, \ \ \ [G^{\pm},H_L]=0,
$$
where $H_L$ is the left-moving Hamiltonian, and parallel relations for the right movers.
A prototypical example of $N=2$ SCFT is the supersymmetric non-linear $\sigma$-model into a Calabi--Yau threefold.
To get a chiral ring, we need to consider suitable combinations of left- and right-moving supersymmetries.
There are two inequivalent choices, up to conjugation,
$$
Q_A:=G^++\overline{G}^+, \ \ \ Q_B:=G^++\overline{G}^-.
$$
The ring of the cohomology operators for $Q_A$ is called the $(c,c)$ ring and that for $Q_B$ is called the $(a,c)$ ring, where $a$ and $c$ stand for chiral and anti-chiral respectively.
As far as cohomology states are concerned $Q_A$ and $Q_B$ and their conjugates all give rise to an equivalent Hilbert space.
However, the rings of cohomology operators are different (via the state-operator correspondence).
The origin of mirror symmetry is the sign flip of the left moving current $J \leftrightarrow -J$, which is just a matter of convention.
Mirror symmetry relates the deformation of the $(a,c)$ chiral ring with that of the $(c,c)$ chiral ring as we will see below.  \\

Topological {\it string} theory is obtained by coupling the above theory with the {\it world-sheet gravity}.
This means that we integrate the correlation functions over the moduli spaces of Riemann surfaces.
In this case, the maps $\sigma:\Sigma_g\rightarrow Y$ from the world-sheet Riemann surfaces $\Sigma_g$ to a target space $Y$ are interpreted as Feynman diagrams in the string theory.
Here the target space $Y$ depends on the construction of $N=2$ SCFT.
In order to have globally defined charges on the Riemann surfaces, a {\it topological twist} is required  \cite{Wit}.
This makes $Q$ a scalar operator and also changes the $J$-charge of the chiral rings.
There are two types of topological twists called  the A-model and B-model corresponding to the choice of the scalar operator $Q=Q_A$ and $Q=Q_B$, respectively.
An advantage of the twisted topological theory lies in the fact that the physical states of the theory correspond to cohomology classes of $Q$
and that the path integral for a $Q$-invariant amplitude localizes to a sum of fixed points of the symmetry.
We can think of the twisted topological theory as extracting a certain class of supersymmetric ground states from the original SCFT.
In the $(c,c)$-twisting case (A-model), the topological correlation functions are sensible only to the K\"ahler class of $Y$ and compute the rational curves in $Y$.
On the other hand, in the $(a,c)$-twisting case (B-model), the topological correlation functions are sensible only to the complex structure of $Y$. \\

The space of ground-states $\mathcal{H}$ gives rise to a vector bundle over the moduli space $\mathcal{M}$ of the theory.
The vacuum state, which corresponds to the identity element in the chiral ring, varies over the moduli space and induces a splitting of the bundle $\mathcal{H}$,
which collects the states created by the chiral ring of $(J,\overline{J})$-charge $(1,1)$,
$\mathcal{H}=\oplus_{i=0}^3\mathcal{H}^{(i,i)}$
with the charge grading.
The chiral ring $\mathcal{H}$ has an associative multiplication $\circ$ described as follows.
We take a basis $\{\psi_{0},\psi_{a},\psi^{a},\psi^{0}\}_{a=1}^{\mathrm{dim}\mathcal{H}^{1,1}}$ of $\mathcal{H}$,
where $\psi_{0}$ is the identity operator of charge $(0,0)$ and $\psi_{a}$'s are of charge $(1,1)$,
and we require the basis to be symplectic with respect to topological metric (the topological correlation function on the sphere): $\langle \psi_a,\psi^b\rangle_0 =\delta_{a}^{b}, \langle\psi_0,\psi^0\ \rangle_0 =1$.
Then the ring structure, called a Frobenius structure, is given by
$$
\psi_{a}\circ\psi_{0}= \psi_{a},\ \ \
\psi_{a}\circ \psi_{b}=C_{abc}\psi^{c}, \ \ \
\psi_{a}\circ\psi^{b}=\delta_{a}^{b}\psi^{0}, \ \ \
\psi_{a}\circ\psi^{0}=0,
$$
where $C_{abc}:=\langle \psi_a,\psi_b,\psi_c\rangle_0$ are the 3-point functions on the sphere. \\

In the A-model realization ($Y$ K\"ahler or symplectic), the ring $\mathcal{H}$ is given by
\begin{equation*}
\mathcal{H}=H^{even}(Y,\mathbb{C})=\bigoplus_{d=1}^3H^{2d}(Y,\mathbb{C})\,,
\end{equation*}
with $\{\psi_{0},\psi_{a}\}_{a=1}^{h^{1,1}(Y)}$ the basis for $H^{0}(Y,\mathbb{C}), H^{1,1}(Y,\mathbb{C})$ respectively, and
$\{\psi^{a},\psi^{0}\}_{a=1}^{h^{1,1}(Y)}$ the dual basis. In this realization, the moduli space $\mathcal{M}$ is the moduli space of complexified K\"ahler structures of $Y$ (see \cite{Mor, CK} for example) and
$\{\psi_{a}\}$ provides a basis for the tangent space of $\mathcal{M}$.
The multiplication $\circ $ corresponds to the quantum product in the quantum cohomology ring $(H^{even}(Y,\mathbb{C}), \circ )$.
In fact, the structure constants $C_{abc}$ are the A-Yukawa couplings $K_{abc}$ in the $\{t^{a}\}_{a=1}^{h^{1,1}(Y)}$ coordinates
and are the generating function of genus zero Gromov--Witten invariants (with three insertions $\psi_{a}, \psi_{b},\psi_{c}$).\\

In the B-model realization ($Y$ Calabi--Yau), the ring $\mathcal{H}$ is given by
\begin{equation*}
\mathcal{H}= \bigoplus_{p=0}^{3}H^{p}(Y, \wedge^{p}TY)\cong \bigoplus_{p=0}^{3}H^{3-p,p}(Y,\mathbb{C})\,,
\end{equation*}
where the map between $H^{p}(Y, \wedge^{p}TY)$ and $H^{p}(Y,\Omega^{3-p})=H^{3-p,p}(Y,\mathbb{C})$
is obtained by taking the wedge product with a choice $\Omega$ for a section of the vacuum bundle.
Similar to the A-model, we can take the basis for $H^{0}(Y, \wedge^{0}TY),H^{1}(Y, \wedge^{1}TY)$ to be $\{\psi_{0},\psi_{a}\}_{a=1}^{h^{2,1}(Y)}$.
The moduli space $\mathcal{M}$ is the moduli space of complex structures of $Y$ and
$\{\psi_{a}\}_{a=1}^{h^{2,1}(Y)}$ provides a basis for the tangent space of $\mathcal{M}$, which is identified with $H^{1}(Y,TY)$ by the Kodaira--Spencer map.
Then the product $\circ$ becomes the wedge product in the cohomology.
In particular, the structure constants $C_{abc}$ are given by
\begin{equation*}
C_{abc}=-\int_{Y} \Omega \wedge \psi_{a}\psi_{b}\psi_{c}\Omega\,,
\end{equation*}
which are the normalized B-model Yukawa couplings in the special coordinates $\{t^{a}\}_{a=1}^{h^{2,1}(Y)}$. \\

A pair $(X^\vee,X)$ of Calabi--Yau threefolds is called a mirror pair if the A-model with target space $X^\vee$ is equivalent to the B-model with target space $X$, and vice versa.
The variation of the splitting is encoded in the Gromov--Witten invariants in the A-model.
In the B-model, we consider the non-holomorphic variation of Hodge structure (instead of holomorphic filtration) and we already see the origin of holomorphic anomalies here.
These two variations of the splittings are governed by the special K\"ahler geometry on the moduli spaces \cite{Str} . \\

The key observation \cite{BCOV,BCOV2} is the failure of decoupling of the two conjugate theories on $\Sigma_g$.
Due to this interaction, the topological string amplitude $\mathcal{F}_g$ should depend also on its conjugate coordinates in the following manner:
\begin{align}
\mathcal{F}_1:&=\int_{\overline{M_1}} \frac{d\tau d \bar \tau}{\Im(\tau)} \mathrm{Tr}(-1)^{F_L+F_R}F_LF_Rq^{L_0}\bar q^{\bar L_0},
\ \ \ q:=e^{2\pi \sqrt{-1} \tau},\notag \\
\mathcal{F}_g:&=\int_{\overline{M_g}}[dm^{i}d\bar m^{\bar i}]\langle \prod_{i=1}^{3g-3} (\int_{\Sigma_g} G^+ \mu_i) (\int_{\Sigma_g}G^- \bar \mu_{\bar i})\rangle \ \ (g\ge 2),\ \notag
\end{align}
where $F_{L},F_{R}$ are the fermion number operators, $\mu_i \in TM_g|_{\Sigma_g} \cong H^1(\Sigma_g,T\Sigma_g)$ is the Beltrami differential and $dm^{i}$ is the dual 1-form to $\mu_i$.
Then the anti-holomorphicity of $\mathcal{F}_g$ is measured by the boundary components of $\overline{M}_g$ corresponding to degenerate curves.
This leads us to the {\it BCOV holomorphic anomaly equations} (see Section \ref{BCOV HAE}):
$$
\partial_{\bar i}\mathcal{F}_{g}
=\frac{1}{2}C^{jk}_{\bar i} ( D_j D_k \mathcal{F}_{g-1} + \sum_{r=1}^{g-1} D_j \mathcal{F}_{r}\, D_k \mathcal{F}_{g-r}) \ \ \ (g\ge 2) .
$$
It is important that the equations is written in terms of special K\"ahler geometry,
in particular the Weil--Petersson geometry in the B-model, and thus things are easier to compute in the B-model.
Moreover, there is a procedure, called the {\it holomorphic limit} (Section \ref{Holomorphic Limit}), to obtain a holomorphic object.
For example, the Gromov--Witten potential is obtained as the holomorphic limit
$$
{\tt F}_g(t)=\lim_{\bar t \rightarrow \sqrt{-1}\infty}(\phi^{0})^{2g-2}\mathcal{F}_g(t,\bar t)
$$
of the topological string amplitude, where $\phi^{0}$ is the period integral described in Section \ref{special coordinates}.\\

We close this section by commenting on Witten's insight into the BCOV theory.
In \cite{Wit2}, he considered a Hilbert space obtained by geometric quantization of $H^3(X,\mathbb{R})$ as a symplectic phase space
and related it to the base-point independence of the total free energy $\mathcal{Z}=\sum_{g=0}^{\infty}\lambda^{2g-2}\mathcal{F}_{g}$ of the B-model on the family.
The background (base-point) independence of $\mathcal{Z}$ tells that it satisfies some wave-like equations on $\mathcal{M}$ arising from geometric quantization.
These equations are shown to be equivalent to the master anomaly equations \cite{BCOV2} for $\mathcal{Z}$,
which are identical to the set of holomorphic anomaly equations for the topological string amplitudes $\{\mathcal{F}_{g}\}_{g=0}^{\infty}$.


\subsection{Mirror Symmetry in Mathematics}\label{MSinmath}
Mirror symmetry in a broad sense claims that, given a family of Calabi--Yau threefolds  $\mathcal{X}\to \mathcal{M}$ with a so-called large complex structure limit (LCSL, see for example \cite{Mor, CK} for details),
there exists another family $\mathcal{X}^\vee\to \mathcal{N}$ of Calabi--Yau threefolds such that complex geometry of $X$ is equivalent to symplectic geometry of $X^\vee$.
Here $X$ and $X^\vee$ are generic members of $\mathcal{X} \to \mathcal{M}$  and $\mathcal{X}^{\vee} \to \mathcal{N}$ respectively.
There are various version of mirror symmetry \cite{CK, HKKPTVVZ} and we will explain only one version of mirror symmetry below \cite{CdOGP}. \\

We begin with a formulation of $g=0$ mirror symmetry.
We will use the same notation as in Section \ref{Special Kahler Geometry}.
Let $[\phi^0,\dots,\phi^n]$ be the local projective coordinates around the LCSL of $\mathcal{M}$.
Assume that $A^0 \in H_3(X,\mathbb{Z})$ is the vanishing cycle at the LCSL of the family $\mathcal{X}\to \mathcal{M}$.
Then we define a local coordinates $\{t^i\}_{i=1}^n$ around the LCSL by
$$
[\phi^0(z),\dots,\phi^n(z)]=\phi^0(z)[1,t^1(z),\dots,t^n(z)],
$$
and introduce the mirror map by $q_i(z):=e^{2\pi \sqrt{-1}t^i}$.
The Picard--Fuchs system, together with the Griffith transversality condition, solve for the B-Yukawa couplings $C_{ijk}(z)$ of $X$.
The $g=0$ mirror symmetry claims that the A-Yukawa coupling of $X^\vee$
$$
K_{ijk}:=\frac{\partial}{\partial t^i}\frac{\partial}{\partial t^j}\frac{\partial}{\partial t^k} {\tt F}_0(t)
$$
is obtained by, together with the mirror map, the following:
\begin{equation}\label{normalizedYukawa}
K_{ijk}(q)=(\phi^0(z))^{-2}C_{lmn}(z)\frac{\partial z^l}{\partial t^i}\frac{\partial z^m}{\partial t^j}\frac{\partial z^n}{\partial t^k}.
\end{equation}
While this version of $g=0$ mirror symmetry conjecture is still open in general,
it is rigorously proven for a large class of Calabi--Yau threefolds independently by Givental \cite{Giv} and Lian--Liu--Yau \cite{LLY}. \\

We are now in a position to give a formulation of higher genus $(g\ge 1)$ mirror symmetry.
The classical $g=0$ mirror symmetry is concerned with counting rational curves in a given Calabi--Yau threefold $X^\vee$ and it is governed by Hodge theory of its mirror threefold $X$.
The main feature of higher genus mirror symmetry is that the theory is no longer governed by holomorphic objects
but a mixture of holomorphic and anti-holomorphic objects in a controlled manner.
It is safe to say that the mathematics involved in higher genus mirror symmetry has not well-understood at this point.
For example, we do not have a convenient mathematical definition\footnote{See \cite{Cos} which proposes a rigorous definition for the $\mathcal{F}_{g}$'s.}
of topological string amplitudes $\mathcal{F}_g$ for $g \ge 2$.
Despite some mathematical difficulty, higher genus mirror symmetry is summarized as follows:

\begin{conjecture}[Mirror Symmetry \cite{BCOV,BCOV2}]
Let $(X,X^\vee)$ be a mirror pair of Calabi--Yau threefolds.
Assume that a LCSL on the complex moduli space $\mathcal{M}$ of $X$ is chosen.
Then the following holds:
\begin{enumerate}
\item There exists a $C^\infty$-section $\mathcal{F}_g(z,\bar z)\in \Gamma_{C^\infty}( \mathcal{M},\mathcal{L}^{2-2g})$,
called the genus $g$ topological string amplitude.
\item There exist recursive equations, called BCOV holomorphic anomaly equations, which measure the anti-holomorphicity of $\mathcal{F}_g(z,\bar z)$:
\begin{align}
\partial_{i}\partial_{\bar j}\mathcal{F}_1 &=\frac{1}{2}C_{ijk}C^{kl}_{\bar j}+(1-\frac{\chi(X^\vee)}{24})G_{i \bar j}, \notag \\
\partial_{\bar i}\mathcal{F}_{g}
&=\frac{1}{2}C^{jk}_{\bar i} ( D_j D_k \mathcal{F}_{g-1} + \sum_{r=1}^{g-1} D_j \mathcal{F}_{r}\, D_k \mathcal{F}_{g-r}) \ \ \ (g\ge 2) .\notag
\end{align}
\item There exists a procedure, called the {\it holomorphic limit},  to obtain from $\mathcal{F}_g(z,\bar z)$
a holomorphic section $F_g(z)\in \Gamma( \mathcal{M},\mathcal{L}^{2-2g})$.
\item  The Gromov--Witten potential ${\tt F}_g(t)$ of $X^\vee$ is obtained by the following identity under the {\it mirror map}
$$
{\tt F}_g(t)=(\phi^0(z))^{2g-2}F_g(z),
$$
where the mirror map and the period $\phi^{0}(z)$ are taken at the LCSL.
\end{enumerate}
\end{conjecture}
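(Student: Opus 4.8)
The plan is to prove the four assertions by induction on the genus $g$, since by construction each $\mathcal{F}_g$ depends on lower-genus data only. The genuine geometric input enters at the base cases $g=0$ and $g=1$; for $g\ge 2$ the statements become largely a matter of setting up and solving the recursion once a suitable object $\mathcal{F}_g$ is in hand. Throughout I would work on the moduli space $\mathcal{M}$ as a special K\"ahler manifold and exploit the structural identities already recorded, namely the $tt^*$-equations (Cecotti--Vafa) and the special K\"ahler geometry relation (\ref{Special Kahler geometry relation}).

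For the \emph{genus one} case I would realize $\mathcal{F}_1$ as the logarithm of a Quillen-type metric on the determinant of cohomology of the family $\mathcal{X}\to\mathcal{M}$, that is, the BCOV torsion assembled from the Ray--Singer analytic torsion. The first anomaly equation $\partial_i\partial_{\bar j}\mathcal{F}_1=\frac{1}{2}C_{ijk}C^{kl}_{\bar j}+(1-\frac{\chi(X^\vee)}{24})G_{i\bar j}$ should then follow from the curvature formula for Quillen metrics of Bismut--Gillet--Soul\'e: the Weil--Petersson term $G_{i\bar j}$ comes from the determinant-line curvature, and the anomaly term $C_{ijk}C^{kl}_{\bar j}$ is produced by substituting (\ref{Special Kahler geometry relation}). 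This simultaneously exhibits $\mathcal{F}_1\in\Gamma_{C^\infty}(\mathcal{M},\mathcal{L}^{0})$, settling assertions (1) and (2) at $g=1$.

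For \emph{higher genus} I would \emph{define} $\mathcal{F}_g$ by integrating the second anomaly equation. Given the lower-genus sections, the right-hand side $W_{\bar i}:=\frac{1}{2}C^{jk}_{\bar i}\bigl(D_jD_k\mathcal{F}_{g-1}+\sum_{r=1}^{g-1}D_j\mathcal{F}_r\,D_k\mathcal{F}_{g-r}\bigr)$ is a known $(0,1)$-form with values in $\mathcal{L}^{2-2g}$, and I would solve $\partial_{\bar i}\mathcal{F}_g=W_{\bar i}$ up to a holomorphic ambiguity $f_g(z)\in\Gamma(\mathcal{M},\mathcal{L}^{2-2g})$. The solvability hinges on $W_{\bar i}$ being $\bar\partial$-closed, which I would verify using the $tt^*$-equations and the flatness $[\mathbb{D}_i,\mathbb{D}_{\bar j}]=0$; this yields (1) and (2) for all $g$. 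The ambiguity $f_g$ is then pinned down by boundary conditions on $\mathcal{M}$ (the gap condition at conifold points together with the constant-map contribution at the LCSL). Assertion (3), the holomorphic limit, I would extract by analyzing the degeneration of the $tt^*$-data as $\bar z$ tends to the LCSL: in special coordinates the propagators and the $e^{K}$ factors have controlled asymptotics governed by Schmid's nilpotent orbit theorem, so that $(\phi^0)^{2g-2}\mathcal{F}_g$ admits a finite limit as $\bar t\to\sqrt{-1}\infty$ and that limit is holomorphic. For assertion (4) I would identify this limit with ${\tt F}_g$: at $g=0$ this is the mirror theorem of Givental and Lian--Liu--Yau, and at $g=1$ it follows from Zinger's computation.

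\emph{The hard part} is twofold and is precisely why this remains a conjecture. First, there is at present no intrinsic B-model construction of $\mathcal{F}_g$ for $g\ge 2$ from which the anomaly equation can be \emph{derived} rather than merely \emph{imposed}; the physical derivation, which reads the anti-holomorphicity off the boundary divisors of $\overline{M}_g$, has no rigorous mathematical counterpart in general (the footnoted proposal of Costello--Li being a candidate starting point). Second, even granting such a definition, matching the holomorphic limit with ${\tt F}_g$ for $g\ge 2$ would require a higher-genus mirror theorem on the Gromov--Witten side, which is not available. Consequently the induction above is complete only at $g\le 1$, and a full proof lies beyond present methods.
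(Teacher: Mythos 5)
You cannot be graded against the paper's proof here, because the paper does not prove this statement and does not claim to: it is stated as a conjecture, and the surrounding text says explicitly that for $g\ge 2$ there is no convenient mathematical definition of $\mathcal{F}_g$, that the amplitudes are \emph{defined} as solutions of the anomaly equations with boundary conditions, and that only $g=0$ (Givental, Lian--Liu--Yau) and $g=1$ (Zinger, and the BCOV torsion of \cite{FLY}) are theorems in special cases. Your proposal correctly recognizes this and, as a map of the terrain, faithfully reassembles the paper's own scaffolding: $\mathcal{F}_1$ via Ray--Singer/BCOV torsion as conjectured in \cite{BCOV,BCOV2} and realized in \cite{FLY}; $\mathcal{F}_g$ for $g\ge 2$ by integrating the equation up to a holomorphic ambiguity $f_g$, with integrability checked against the $tt^*$-equations and the special K\"ahler relation (\ref{Special Kahler geometry relation}); the holomorphic limit via the canonical-coordinate picture of Section \ref{Holomorphic Limit}; and the matching with Gromov--Witten potentials at $g\le 1$. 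Your closing diagnosis of why the statement remains open matches the paper's.

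Two of your intermediate claims are nonetheless overstated, and they matter even for the conditional argument you sketch. First, you assert that $f_g$ ``is then pinned down by boundary conditions.'' That is false in general: the ambiguity is a rational section whose number of free coefficients grows with $g$, while the constant-map condition (\ref{bdyconditionatLCSL}) and the gap condition (\ref{gap}) supply only finitely many constraints per genus; even for the mirror quintic this procedure is known to close only up to $g=51$ \cite{HKQ}, and the paper is careful to say it works ``in a good situation and for small $g$.'' So your inductive step does not close even granting everything else. Second, your base cases are established only for special families, not for an arbitrary mirror pair $(X,X^\vee)$: \cite{FLY} identifies the BCOV torsion with the $g=1$ amplitude for the quintic family, and Zinger's theorem \cite{Zin} (with the extensions \cite{Zin2,Pop}) covers hypersurfaces and certain complete intersections. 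Also note that the Quillen/Bismut--Gillet--Soul\'e route to the $g=1$ equation requires the full BCOV combination of analytic torsions over all $\bar\partial$-complexes $\Omega^{0,\bullet}(\wedge^p T_X)$, weighted so as to produce the $(1-\chi/24)$ coefficient in front of $G_{i\bar j}$; a single determinant line does not give equation (\ref{BCOV HAE g=1}). Finally, be aware that once you \emph{define} $\mathcal{F}_g$ as a solution of the recursion, assertions (1)--(2) hold by fiat and all content migrates to assertion (4), which is precisely the part with no available attack --- a point you do flag, and which is the honest conclusion.
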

The classical $g=0$ mirror symmetry also fits into this framework but without holomorphic anomaly, i.e. $\mathcal{F}_0(z,\bar z)=F(z)$.
The difficulty in higher genus mirror symmetry lies in the fact
that the BCOV holomorphic anomaly equations determine the topological string amplitude $\mathcal{F}_{g}(z,\bar z)$ only up to some holomorphic ambiguity $f_{g}(z)$.
For small genus $g$, the ambiguity can be fixed by the knowledge on the behavior of $\mathcal{F}_g$ at the various boundaries of the moduli space.
This is a rough sketch of higher genus mirror symmetry.
We will explain more details of the holomorphic anomaly equations in Section \ref{BCOV HAE} and the holomorphic limit in Section \ref{Holomorphic Limit}. \\

It is worth mentioning some recent progress on rigorous mathematical studies of $g=1$ mirror symmetry.
The $g=1$ mirror formula \cite{BCOV2} for the quintic Calabi--Yau threefold is first proved in \cite{Zin} and its extension to higher dimension is shown in some cases \cite{Zin2,Pop}.
Inspired by the BCOV theory, the paper \cite{FLY} defines an invariant, called the BCOV torsion, of a one-parameter family of Calabi--Yau threefolds, which is an analogue of the Ray--Singer analytic torsion.
They also identify this invariant is the B-model topological string amplitude for the quintic in \cite{BCOV2}.

\section{BCOV Holomorphic Anomaly Equations} \label{BCOV HAE}

The central theme of this section is the BCOV holomorphic anomaly equations \cite{BCOV,BCOV2}, which measure the anti-holomorphicity of the topological string amplitudes $\mathcal{F}_g$ $(g\ge 1)$.
The presence of holomorphic anomaly in the theory makes higher genus mirror symmetry more challenging.


\subsection{Toy Model (Elliptic Curve)} \label{Elliptic curve}
Let us begin our discussion by working on an elliptic curve\footnote{
This case is somewhat misleading because an elliptic curve is a self-mirror manifold.
However, we believe this is still a good example the reader should keep in mind.
}.
We compute the topological string amplitude $F_g(t)$ for an elliptic curve $E$ as a target space.
Since $F_0(t)$ is trivial, the first non-trivial quantity is $F_1(t)$.
The number of connected coverings $E\rightarrow E_\tau$ of degree $d$ is given by the sum of divisors $\sigma(d):=\sum_{k|d}k$
and that each such space is normal with a group of deck transformations of order $d$.
Therefore\footnote{We have to take extra care of the first term of the second line, see \cite{Dij}. }
\begin{align}
F_1(t):&= \int_{\overline{M}_{1,1}}\frac{d\tau d\bar \tau}{(\Im \tau)^2}\sum_{\phi:E\rightarrow E_\tau}e^{2 \pi t\sqrt{-1} \int \phi^*\kappa} \notag \\
&= -\frac{2\pi i t}{24}+\sum_{d>0} \frac{\sigma(d)}{d} e^{2\pi \sqrt{-1}dt} \notag \\
&=-\log(\eta(t)), \notag
\end{align}
where $\eta(t)=q^{\frac{1}{24}}\prod_{n>0}(1-q^n)$ is the Dedekind eta function with $q=e^{2\pi \sqrt{-1} t}$.
The function $F_1(t)$ is unfortunately not modular and we introduce the following non-holomorphic modular function
$$
\mathcal{F}_1(t,\bar t):=-\log (\sqrt{\Im(t)} {\bar \eta}(t) \eta(t)).
$$
This is an example of {\it holomorphic anomaly} and the {\it holomorphic anomaly equation} in this case reads
$$
\partial_t \partial_{\bar t}\mathcal{F}_1(t,\bar t)=\frac{1}{2(t-\bar t)^2}.
$$
This equation together with the modular property recovers the quantity $\mathcal{F}_1(t,\bar t)$. 
It also shows that the holomorphic anomaly is captured by the Poincar\'e geometry. 
For $g\ge 2$, we count the number of coverings of an elliptic curve $E_\tau$ simply ramified at $2g-2$ distinct points.
This number is known as the Hurwitz number.
In \cite{Dij} Dijkgraaf observed that, the topological string amplitude $F_{g}(t)$, the generating function of the Hurwitz numbers, is {\it quasi-modular}.
This is understood as the modular anomaly of $F_g(t)$ for $g\ge 2$.
Let us recall some basics of quasi-modular forms \cite{KZ}.
It is known that the ring of the modular forms is generated by the Eisenstein series $E_4(t),E_6(t)$ over $\mathbb{C}$.
On the other hand, $E_2(t)$ is not modular, but quasi-modular in the sense that
$$
E_2(
\begin{bmatrix}
        a & b\\
        c & d\\
        \end{bmatrix}\cdot t
)
=(ct+d)^2E_2(t)+\frac{6}{\pi \sqrt{-1}} c(ct+d)
$$
and the ring of quasi-modular forms is given by $\mathbb{C}[E_2,E_4,E_6]$. By introducing non-holomorphicity to $E_2(t)$ by
$$
E_2^*(t,\bar t):=E_2(t)+\frac{6}{\pi \sqrt{-1}}\frac{1}{t-\bar t},
$$
we can check that the new function $E_2^*(t,\bar t)$ on $\mathbb{H}$ is modular in a natural sense and thus called an almost-holomorphic modular form (Section \ref{Kaneko--Zagier}).
The ring of almost-holomorphic modular forms is given by $\mathbb{C}[E_2^*,E_4,E_6]$
and there exists a natural object $\mathcal{F}_g \in \mathbb{C}[E_2^*,E_4,E_6]$ associated to $F_g(t)$ for $g \ge 2$.
There is, however, no known explicit holomorphic anomaly equations of higher genus for elliptic curves.


\subsection{Holomorphic Anomaly Equations}
Let $(X^\vee, X)$ be a mirror pair of Calabi--Yau threefolds and $\mathcal{L}$ be the vacuum bundle of the complex moduli space of $X$.
In \cite{BCOV,BCOV2}, Bershadsky, Cecotti, Ooguri and Vafa identified the higher genus topological string amplitude $\mathcal{F}_g$ with $g\geq 2$
as a smooth section of the line bundle $\mathcal L^{2-2g}$ with holomorphic anomaly described by
\begin{equation}
\partial_{\bar i}\mathcal{F}_{g}
=\frac{1}{2}C^{jk}_{\bar i}
( D_j D_k \mathcal{F}_{g-1} + \sum_{r=1}^{g-1} D_j \mathcal{F}_{r} \,D_k \mathcal{F}_{g-r}) \ \ \ (g \ge 2). \label{BCOV HAE g>1}
\end{equation}
The recursive equation (\ref{BCOV HAE g>1}) is called the {\it BCOV holomorphic anomaly equation} (BCOV HAE).
The first term represents the degeneration of a genus $g$ curve to a genus $g-1$ curve.
and the second term represents the degeneration of a genus $g$ curve to genus $r$ and $g-r$ curves (see Fig. 1).
\begin{figure}[h!]
 \begin{center}
  \includegraphics[width=85mm]{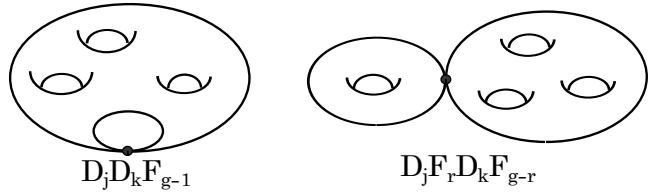}
 \end{center}
 \caption{Degenerating Riemann surfaces contributing to the holomorphic anomaly}
\end{figure}

For $g=1$, the holomorphic anomaly of the topological string amplitude $\mathcal{F}_1$  is measured by the following:
\begin{equation} \label{BCOV HAE g=1}
\partial_i\partial_{\bar j}\mathcal{F}_{1} =\frac{1}{2}C_{ijk}C^{kl}_{\bar j}+(1-\frac{\chi(X^\vee)}{24})G_{i \bar j}.
\end{equation}
This is known as the $tt^{*}$-equation.
In \cite{BCOV,BCOV2} they also conjectured that the smooth function $\mathcal{F}_{1}$ is obtained as the Ray--Singer torsion.
For $g \ge 2$, there is no easy mathematical definition of topological string amplitudes $\mathcal{F}_g \in \Gamma_{C^\infty}(\mathcal{M},\mathcal L^{2-2g})$,
and thus we define them as solutions to the BCOV holomorphic anomaly equations in (\ref{BCOV HAE g>1}) with certain boundary conditions.\\

The basic idea for solving the equation (\ref{BCOV HAE}) is to re-express RHS of the equation as anti-holomorphic derivatives so that we can integrate them up to some holomorphic ambiguity.
For example, in the case where $h^{2,1}(X)=1$, the $tt^{*}$-equation reads
$$
\partial_z \partial_{\bar z}\mathcal{F}_1(z,\bar z) = \frac{1}{2}C_{zzz}C_{\bar z\bar z\bar z}e^{2K}G^{z\bar z}G^{z\bar z}-(\frac{\chi(X^\vee)}{24}-1)G_{z\bar z}.
$$
A solution of the $tt^{*}$-equation is explicitly given by
\begin{equation}\label{soltogenus1hae}
\mathcal{F}_1(z,\bar z)=\frac{1}{2}\log(G^{z \bar z}e^{K(4-\frac{\chi(X^\vee)}{12})})+{1\over 2}|f_1(z)|^{2},
\end{equation}
for some holomorphic ambiguity $f_1(z)$ because
\begin{align}
\partial_z \partial_{\bar z} \mathcal{F}_1 (z,\bar z)
&=\frac{1}{2}\partial_{\bar z}(-\partial_z \log G_{z \bar z} +(4-\frac{\chi(X^\vee)}{12})K_z)      \notag \\
&=\frac{1}{2}(-\partial_{\bar z}\Gamma_{zz}^{\;\;z}+(4-\frac{\chi(X^\vee)}{12})G_{z \bar z})     \notag \\
&=\frac{1}{2}\partial_{\bar z}(C_{zzz}\overline{C}^{zz}_{\bar z}-(\frac{\chi(X^\vee)}{24}-1)G_{z \bar z}). \notag
\end{align}
In the last line we used the special K\"ahler geometry relation (\ref{Special Kahler geometry relation}).

\subsection{Propagators and Polynomiality}
Solving the BCOV holomorphic anomaly equation for large $g$ is very involved
and we need to make the use of certain polynomiality of topological string amplitudes.
In \cite{BCOV2} the authors found it convenient to introduce the following {\it propagators} $S,S^i,S^{ij}$:
$$
S\in \Gamma(\mathcal{M},\mathcal{L}^{-2}), \ \ \
S^i\in \Gamma(\mathcal{M},\mathcal{L}^{-2}\otimes T\mathcal{M}),\ \ \
S^{ij}\in \Gamma(\mathcal{M},\mathcal{L}^{-2}\otimes \mathrm{Sym}^2(T\mathcal{M})),
$$
with relations
\begin{equation}\label{defofpropagators}
C_{\bar i \bar j\bar k}=e^{-2K}D_{\bar i}D_{\bar j} \partial_{\bar k}S, \ \ \
\partial_{\bar i}S^{ij}=C^{ij}_{\bar i}, \ \ \
\partial_{\bar i}S^j=G_{k\bar i}S^{kj}, \ \ \
\partial_{\bar i}S=G_{j \bar i}S^j.
\end{equation}
As the name suggests, they make the connection to the Feynman diagram interpretation in \cite{BCOV2} clearer.
Although the general solutions of the BCOV holomorphic anomaly equations can be obtained by the standard Feynman rules,
for higher genus the number of diagrams grows very quickly with the genus.
\begin{example}
The topological string amplitude $\mathcal{F}_2 (z,\bar z)$ is written as
\begin{eqnarray}\label{soltogenus2hae}
\mathcal{F}_2(z,\bar z)&=& {1 \over 2} S^{ij} D_{ij}\mathcal{F}_1 + {1 \over 2} S^{ij} D_i\mathcal{F}_1 D_j\mathcal{F}_1 - {1 \over 8} S^{jk}S^{mn}D_{jkmn}\mathcal{F}_0 \notag\\
&&-{1 \over 2}S^{ij}S^{mn} D_{ijm}\mathcal{F}_0D_n\mathcal{F}_1 +{\chi \over 24} S^i D_i\mathcal{F}_1  \notag\\
&&+{1 \over 8} S^{ij} S^{pq} S^{mn} D_{ijp}\mathcal{F}_0 D_{qmn} \mathcal{F}_0 + {1 \over 12}S^{ij}S^{pq}S^{mn}D_{ipm}\mathcal{F}_0D_{jqn}\mathcal{F}_0 \notag \\
&&- {\chi \over 48} S^i S^{jk}D_{ijk}\mathcal{F} _0 +{\chi(X^\vee) \over 24}({\chi(X^\vee) \over 24}-1) S + f_2(z),
\end{eqnarray}
where $D_{i_1\dots i_k}:=D_{i_1}\dots D_{i_k}$ and $f_2(z)$ represents a holomorphic ambiguity.
\end{example}
\begin{example}
The topological string amplitude $\mathcal{F}_3$ is written as
\begin{align}
\mathcal{F}_3=& {1 \over 2} S^{ij} D_{ij}\mathcal{F}_{2} + D_{i}\mathcal{F}_{1} S^{ij} D_{j}\mathcal{F}_{2} +({\chi \over 24} + 2) S^{i} D_{i}\mathcal{F}_{2} \notag\\
& + 2 \mathcal{F}_2 S^i D_{i}\mathcal{F}_{1} -{1 \over 2} S^{ij} D_{ijk}\mathcal{F}_{0} \,S^{kl} D_{l}\mathcal{F}_{2}- {1 \over 4} S^{ij}S^{kl} D_{ijkl}\mathcal{F}_{1} \notag\\
& - {1 \over 2} S^{ij} D_{ijk}\mathcal{F}_{1}\,S^{kl} D_{l}\mathcal{F}_{1} -{1 \over 4} S^{ij}S^{kl} D_{ik}\mathcal{F}_{1} D_{jl} \mathcal{F}_{1} + \cdots + f_3(z),\notag
\end{align}
where $f_3(z)$ represents a holomorphic ambiguity.
\end{example}

Motivated by the work \cite{BCOV2}, in \cite{YY} Yamaguchi and Yau show
for the mirror quintic family that the topological string amplitudes $\mathcal{F}_g$ are polynomials in the propagators $S^{ij},S^i,S$ and the K\"ahler derivatives $K_{i}$.
This was generalized in \cite{AL} to general Calabi--Yau threefolds.
The polynomiality for the topological string amplitudes $\mathcal{F}_g$ provides a significant enhancement for
practical computations and also equips the ring generated by the propagators and K\"ahler derivatives with interesting mathematical structures.
A more detailed overview of this subject, as well as the connection of the ring to modular forms \cite{ABK, Hos, ASYZ, Zho, Ali2}, can be found in a separate expository article \cite{Zho2}.


\section{Holomorphic Limits and Boundary Conditions} \label{Holomorphic Limit}
In this section we first discuss {\it holomorphic limits}, which relate an almost-holomorphic object $\mathcal{F}_g \in \Gamma_{C^\infty}(\mathcal{M},\mathcal{L}^{2-2g})$ to
a holomorphic object $F_g \in \Gamma(\mathcal{M},\mathcal{L}^{2-2g})$.
We then turn to the boundary conditions of the topological string amplitudes $\mathcal{F}_g$.
The holomorphic limit and boundary conditions should be compared with the theory of (quasi- and almost-holomorphic) modular forms \cite{ABK, ASYZ}.


\subsection{Toy Model (Kaneko--Zagier Theory)} \label{Kaneko--Zagier}
It is instructive to compare the holomorphic limit with the classical theory of modular forms (see also Section \ref{Elliptic curve}).
We briefly review the Kaneko--Zagier theory \cite{KZ}.
We consider the almost-holomorphic modular forms $\widehat M(\Gamma)_k$
of weight $k$ as the functions $F(t,\bar t) \in \mathbb{C}[[t]] \big[ \frac{1}{t-\bar t} \big]$ on $\mathbb{H}$ which transforms just like a modular form of weight $k$;
$$
F(
\begin{bmatrix}
        a & b\\
        c & d\\
        \end{bmatrix}\cdot t,
\overline{
\begin{bmatrix}
        a & b\\
        c & d\\
        \end{bmatrix}} \cdot \bar t
)
=(ct+d)^kF(t,\bar t).
$$
The ring of the almost-holomorphic modular forms $\widehat M(\Gamma) :=\oplus_{k \ge 0} \widehat M(\Gamma)_k$
is given by $\widehat M(\Gamma) =\mathbb{C}[E_2^*,E_4,E_6]$ and becomes a differential ring under the operator
$$
\frac{1}{2\pi i} \big( \frac{\partial}{\partial t}+\frac{k}{t-\bar t} \big):
\widehat M(\Gamma)_k \rightarrow \widehat M(\Gamma)_{k+2}.
$$
The elements of $\widehat M(\Gamma)$ have an expansion of the form
$F(t,\bar t)=\sum_{m \ge 0} \frac{F_m(t)} {(t-\bar t)^m}$.
The key observation \cite{KZ} is that the map
$$
\phi: \widehat M(\Gamma) \rightarrow \mathbb{C}[E_2(t),E_4(t),E_6(t)], \ \ \ F(t,\bar t) \mapsto F_0(t).
$$
is a differential ring isomorphism,
where the LHS is equipped with the differential $\frac{1}{2\pi \sqrt{-1}} \frac{\partial}{\partial t}$.
As we mentioned earlier, the map $\phi$ gives a correspondence between $\mathcal{F}_g$ and $F_g$ for the elliptic curves.
We observe that these rings are governed by the Poincar\'e metric
$ ds^2:=-\partial_t \partial_{\bar t} \log(t-\bar t)$ on $\mathbb{H}$.
We can think of the Weil--Petersson metric and the holomorphic limit as higher dimensional analogues of the Poincar\'e metric and the map $\phi$ respectively.
This similarity has been further analyzed in \cite{ABK, Hos,Zho}.


\subsection{K\"ahler Normal Coordinates}  \label{Kahler normal coordinates}
Let $M$ be a K\"ahler manifold of dimension $m$ with  with K\"ahler potential $K(z,\bar{z})$.
The canonical coordinates $\{t^i\}_{i=1}^m$ around $p=(a,\bar{a}) \in M$
are defined to be the holomorphic coordinates such that
\begin{equation}\label{canonicalcoordinates}
\partial_{I} K_{i}|_{p}=0=\partial_{I}\Gamma_{ij}^{k}|_{p},
\end{equation}
where $\partial_{I}=\partial_{t^{i_{1}}}\cdots\partial_{t^{i_{n}}}$ for $I=(i_{1},i_{2},\cdots i_{n})$.
One can locally solve the second equation in (\ref{canonicalcoordinates}) for $t$ to get the following, see e.g. \cite{HIN,GS}:
\begin{equation*}\label{solofcanonicalcoordinates}
t^{i}(z)=K^{i\bar{j}}(a,\bar{a})(K_{\bar{j}}
(z,\bar{a})-K_{\bar{j}}(a,\bar{a}))\,.
\end{equation*}
The holomorphic function $f(z,\bar{a})$ is the degree $0$ part
in the Taylor expansion of the function $f(z,\bar{z})$ in $\bar{z}$ centered at $\bar{a}$.
This will be explained below using a holomorphic exponential map \cite{Kap}.\\

We first consider the exponential map $\exp^{\mathbb{R}}_{p}: T^{\mathbb{R}}_{p}M\rightarrow M$ as a Riemannian manifold.
Thinking of $T^{\mathbb{R}}_{p}M$ as a complex vector space equipped with the complex structure induced by that on $M$,
the map $\exp_{p}^{\mathbb{R}}: (\xi,\bar{\xi})\mapsto (z(\xi,\bar{\xi}),\bar{z}(\xi,\bar{\xi}))$ is in general not holomorphic.
Now with the assumption that the metric $G_{i\bar{j}}(z,\bar{z})$ is analytic in $z,\bar{z}$,
we can analytically continue the map $\exp_{p}^{\mathbb{R}}$ to the corresponding complexifications $T^{\mathbb{C}}_{p}M$ and $M_{\mathbb{C}}=M\times \overline{M}$,
where $\overline{M}$ is the complex manifold with complex structure opposite to that on $M$.\\

The coordinates on the complexifications $T^{\mathbb{C}}_{p}M$ and $M_{\mathbb{C}}=M\times \overline{M}$ are respectively given by $(\xi,\eta)$ and $(z,w)$,
which are the analytic continuation of the coordinates $(\xi,\bar{\xi})$ and $(z,\bar{z})$ from
$T^{\mathbb{R}}_{p}M\hookrightarrow T^{\mathbb{C}}_{p}M$ and $\Delta:M\hookrightarrow M_{\mathbb{C}}=M\times \overline{M}$ respectively.
Here $\Delta: M\rightarrow M\times \overline{M}, p\mapsto (p,\bar{p})$ is the diagonal embedding.
The underlying point of $\bar{p}$ is the same as $p$, but we have used the barred notation to indicate that it is a point on $\overline{M}$.\\

Since the Christoffel symbols $\Gamma_{ij}^{k}(z,\bar{z})$ are analytic in $(z,\bar{z})$,
we know that the map $\exp_{p}^{\mathbb{C}}: (\xi,\eta)\mapsto (z(\xi,\eta),w(\xi,\eta))$ is analytic, that is, holomorphic in $(\xi,\eta)$.
Moreover, the map $\exp_{p}^{\mathbb{C}}$ defines a local bi-holomorphism from a neighbourhood around $0 \in T_{p}^{\mathbb{C}}M$
to a neighbourhood of $(p,\bar{p}) \in M_{\mathbb{C}}$.
One claims that $\exp^{\mathbb{C}}_{p}|_{T^{1,0}M}$ gives a holomorphic map $T^{1,0}_{p}M\rightarrow M$
which is locally bi-holomorphic near $0\in T_{p}^{1,0}M$.
To show that it maps $T^{1,0}_{p}M$ to $M$, it suffices to show that $w\circ \exp^{\mathbb{C}}_{p}|_{T^{1,0}_{p}M}=w(\bar{p})$, that is, $w(\xi,0)=w(\bar{p})$.
Recall that $\bar{z}$ and thus $w$ satisfies the equation for the geodesic equation
\begin{equation*}
{d^{2}\over
ds^{2}}\bar{z}^{k}+\Gamma^{\bar{k}}_{\bar{i}\bar{j}}{d\bar{z}^{\bar{i}}\over ds}{d\bar{z}^{\bar{j}} \over ds}=0,\ \ \
 {d\bar{z}^{\bar{k}}\over ds}(0)=\bar{\xi}^{\bar{k}}=0,\ \ \bar{z}(0)=\bar{z}(\bar{p})\,.
\end{equation*}
It is easy to see that $w(s)=w(\bar{p})$ is one and thus the unique solution to the differential equation.
Therefore, $w\circ \exp^{\mathbb{C}}_{p}(\xi,0)=w(\bar{p})$ as desired.
Since $z(\xi,\eta)$ is holomorphic in both $\xi,\eta$, we know $z(\xi,0)$ is holomorphic in $\xi$. The same reasoning for the
exponential map $\exp_{p}^{\mathbb{R}}$ shows that it is locally a bi-holomorphism.\\

Hence one gets a holomorphic exponential map $\exp^{\textrm{hol}}_{p}=\exp^{\mathbb{C}}_{p}|_{T^{1,0}M}: T^{1,0}_{p}M\rightarrow M$.
We now denote the coordinate $\xi$ on $T^{1,0}_{p}M$ by $t$, and then this is the canonical coordinates desired.
The exponential maps $\exp^{\mathbb{R}}_{p}$ and $\exp^{\mathrm{hol}}_{p}$ are contrasted as follows:
\begin{eqnarray*}
\exp_{p}^{\mathbb{R}}&=&\exp^{\mathbb{C}}_{p}|_{T^{\mathbb{R}}_{p}M}=\exp^{\mathbb{C}}_{p}|_{T^{1,0}_{p}M\oplus
\overline{T_{p}^{1,0}M}}\,,\\
\exp_{p}^{\textrm{hol}}&=&\exp^{\mathbb{C}}_{p}|_{T^{1,0}_{p}M}=\exp^{\mathbb{C}}_{p}|_{j(T^{1,0}_{p}M)=T^{1,0}_{p}M\oplus
\{0\}}\,.
\end{eqnarray*}
where $T^{1,0}_{p}M\oplus \overline{T^{1,0}_{p}M}$ means the image of the map
$ T^{1,0}_{p}M\rightarrow T^{1,0}_{p}M\oplus T^{0,1}_{p}M,\, v\mapsto (v,v^{*})$,
where $v^{*}$ is the complex conjugate of $v$; and $j(T^{1,0}_{p}M)$ is the image of the map
$j: T^{1,0}_{p}M\mapsto T^{1,0}_{p}M\oplus T^{0,1}_{p}M,\, v\mapsto (v,0)$.


\subsection{Examples of Canonical Coordinates}
In this section we shall compute the canonical coordinates for some examples of K\"ahler manifolds.

\begin{example}[Fubini--Study metric]
Consider the Fubini--Study metric on $\mathbb{P}^{1}$ with K\"ahler potential $K=\log (1+|z|^{2})$. It follows then
\begin{equation*}
K_{z}={\bar{z}\over (1+|z|^{2})},\quad K_{z\bar{z}}={1\over
(1+|z|^{2})^{2}}, \quad\partial_{z}^{N}K_{\bar{z}}={(-1)^{N+1}N!
\bar{z}^{N-1}\over (1+|z|^{2})^{N+1}},\,N\geq 1\,.
\end{equation*}
We see that $z$ is the canonical coordinate based at $a=0$.
To find the canonical coordinate at a point $p$ represented by $a\neq 0$,
we apply Eq. (\ref{solofcanonicalcoordinates}) and get
\begin{equation*}
t(z)=(1+|a|^{2})^{2}\left({z\over
(1+z\bar{a})}-{a\over (1+a\bar{a})}\right)\,.
\end{equation*}
We see that the canonical coordinates have non-holomorphic dependence on the base-point.
\end{example}

\begin{example}[Poincar\'e metric]  \label{Poincare metric}
Consider the $\textrm{SL}(2,\mathbb{Z})$--invariant metric on $\mathbb{H}$
\begin{equation*}
\omega={\sqrt{-1}\over 2} K_{\tau\bar{\tau}}d\tau\wedge d\bar{\tau}={1\over y^{2}} dx\wedge dy,
\end{equation*}
where $e^{-K}={\tau-\bar{\tau}\over \sqrt{-1}}$, $\tau=x+\sqrt{-1}y$.
Straightforward computations show that
\begin{equation*}
K_{\bar{\tau}}={1\over \tau-\bar{\tau}}, \ \ \
K_{\tau\bar{\tau}}=-{1\over (\tau-\bar{\tau})^{2}}.
\end{equation*}
It follows that the canonical coordinate based at $p$ given by $a$ is
\begin{equation*}
t(\tau)=-(a-\bar{a})^{2}\left(
{1\over \tau-\bar{a}}-{1\over a-\bar{a}}\right).
\end{equation*}
For $a=i\infty$, the canonical coordinate $t$ coincides with the complex coordinate on $\mathbb{H} \subset \mathbb{C}$.
\end{example}

\begin{example}[Weil--Petersson metric for elliptic curve family]\label{WPforellipticcurve}
Consider the ``universal" elliptic curve family parametrized by $\mathbb{H}$.
Fixing the holomorphic top form $\Omega_{\tau}=dz_{\tau}=dx+\tau dy$ on $T_{\tau}$.
Using the diffeomorphism from the fiber $T_{\tau}$ to the fiber $T_{a}$ given by
\begin{equation*}
z_{\tau}={\tau-\bar{a}\over
a-\bar{a}}z_{a}+{a-\tau\over
a-\bar{a}}\bar{z}_{a}\,,
\end{equation*}
we can compute the K\"ahler potential for the Weil-Peterson metric from
\begin{equation*}
e^{-K(\tau,\bar{\tau})}= \sqrt{-1} \int_{T_{\tau}}\Omega_{\tau}\wedge
\overline{\Omega}_{\tau}={\tau-\bar{\tau}\over \sqrt{-1}}.
\end{equation*}
This is  the Poincare metric considered in Example \ref{Poincare metric}.
\end{example}

\begin{example}
Let $M$ be a K\"ahler manifold with local coordinates $\{z^{i}\}$
and a holomorphic function $F(z)$ such that the K\"ahler potential $K$ is given by $K={1\over 2}~\mathrm{Im} w_{i}\bar{z}^{i}$ where $w_{i}(z)={\partial_{i}F(z)}$.
A K\"ahler manifold of this type is a special K\"ahler manifold \cite{Fre} and the canonical coordinates are then given by
\begin{equation*}
t^{i}(z)={1\over
\tau_{ij}(a)-\bar{\tau}_{ij}(\bar{a})}(w_{j}(z)-w_{j}(a)-
\bar{\tau}_{jk}(\bar{a})(z^{k}-a^{k})),
\end{equation*}
where $ \tau_{ij}(z)=\partial_{i}\partial_{j}F(z)$.
\end{example}


\subsection{Holomorphic Limits}\label{sectionhollimits}

The holomorphic limit of a function $f(z,\bar{z})$ based at $a$ is defined as follows.
First one analytically continues the map $f$ to a map defined on $M_{\mathbb{C}}$.
Using the fact that $\exp^{\mathbb{C}}_{p}$ is a local diffeomorphism from $T_{p}^{\mathbb{C}}M$ to $M_{\mathbb{C}}$,
we get $\hat{f}=f\circ \exp_{p}^{\mathbb{C}}: T_{p}^{\mathbb{C}}M\rightarrow \mathbb{C}$.
The holomorphic limit of $f(z,\bar{z})$ is given by $\hat{f}|_{j(T^{1,0})}:T^{1,0}_{p}M\rightarrow T_{p}^{\mathbb{C}}M\rightarrow \mathbb{C}$.
The coordinates $(z,\bar{z})$ and $(t,\bar{t})$ are often used for $(z,w)$ and $(\xi,\eta)$ when considering holomorphic limits.\\

In the canonical coordinates $t$ on the K\"ahler manifold $M$,
the holomorphic limit of $f$ based at $a$ is described by
$$
f\circ \exp^{\textrm{hol}}_{a}=\hat{f}|_{j(T^{1,0}_{a}M)}: T^{1,0}_{a}M\times \{0\}\rightarrow \mathbb{C}, \ \ \ t\mapsto f\circ \exp^{\textrm{hol}}_{a}(t).
$$
In terms of an arbitrary local coordinate system $z$ on $M$,
taking the holomorphic limit of the function $f(z,\bar{z})$ at the base point $a$ is
the same as keeping the degree zero part of the Taylor expansion of $f(z,\bar{z})$ with respect to $\bar{z}$.\\

Let us return to the special K\"ahler geometry on the moduli space of complex structures of a Calabi--Yau threefold.
It can be shown that the special coordinates $\{t^i\}_{i=1}^n$ defined near a LCSL are the canonical coordinates \cite{BCOV}.
Moreover, rewriting the defining equation for the K\"ahler potential introduced in Section \ref{Special Kahler Manifolds} as
\begin{equation*}
e^{-K(z,\bar{z})}=\phi^{0}\overline{\phi^{0}}e^{-K(t,\bar{t})},\quad
e^{-K(t,\bar{t})}=\sqrt{-1}\left(2\overline{F(t)}-2F(t)+(t^{a}-\bar{t}^{a})(F_{a}+\overline{F_{a}})\right),
\end{equation*}
we obtain
\begin{equation*}
K_{i}=-\partial_{i}\log \phi^{0}+K_{a}{\partial t^{a}\over
\partial z^{i}},\quad \Gamma_{ij}^{k}={\partial z^{k}\over \partial t^{a}}{\partial \over \partial z^{i}} {\partial t^{a}\over
\partial z^{j}}+{\partial z^{k}\over \partial t^{c}}\mathrm{\Gamma}_{ab}^{c}{\partial t^{a}\over \partial z^{j}}
{\partial t^{b}\over \partial z^{j}}.
\end{equation*}
Then, according to (\ref{canonicalcoordinates}), their holomorphic limits at the LCSL  are given by:
\begin{equation}\label{hollimitsofconnections}
\lim_{\bar t \to \sqrt{-1}\infty} K_{i}=-\partial_{i}\log \phi^{0},\ \ \
\lim_{\bar t \to \sqrt{-1}\infty} \Gamma_{ij}^{k}={\partial z^{k}\over \partial t^{a}}{\partial \over \partial z^{i}} {\partial t^{a}\over \partial z^{j}}.
\end{equation}
We used the notation $\lim_{\bar t \to \sqrt{-1}\infty}$ because the LCSL corresponds to $\sqrt{-1}\infty$ in the mirror coordinates $t$. In the rest of the article, we shall use the notation $\lim_{a}$ to denote the holomorphic limit based at the point $a$.

\subsection{Boundary Conditions}

As we have mentioned in Section \ref{BCOV HAE}, the holomorphic anomaly equations only determine
the topological string amplitude $\mathcal{F}_{g}$ up to some holomorphic ambiguity $f_{g}(z)$
and certain boundary conditions on the moduli space $\mathcal{M}$ are needed to fix the ambiguity $f_{g}(z)$.
What are commonly used are the physical interpretation of the asymptotic behaviors of $\mathcal{F}_{g}$ at the singular points on the moduli space $\mathcal{M}$.
The boundary conditions of $\mathcal{F}_{g}$ at the LCSL (mirror to the large volume limit of $X^\vee$ given by $t^{i}=\sqrt{-1}\infty$ for $1\le i \le h^{1,1}(X^\vee)$)
and at the conifold loci are satisfied by the holomorphic limits of the normalized topological string amplitude $(\phi^0)^{2g-2}\mathcal{F}_{g}$
based at the corresponding loci on the moduli space \cite{BCOV,BCOV2,Gho, Ant}. \\

At the LCSL, the boundary conditions read
\begin{eqnarray}\label{bdyconditionatLCSL}
\lim_{\textrm{LCSL}} \mathcal{F}_{1}&=&-{1\over 24}t^{i}\int_{X^\vee}c_{2}(X^\vee) \cup T_{i} +O(e^{2\pi i t})\nonumber\,,\\
\lim_{\textrm{LCSL}} (\phi^0)^{2g-2} \mathcal{F}_{g}&=&(-1)^{g}{\chi(X^\vee)\over 2}{|B_{2g}B_{2g-2}|\over 2g(2g-2)(2g-2)!}+O(e^{2\pi i t}),\quad g\geq 2.
\end{eqnarray}
Of course, these come from the expression of the Gromov--Witten potentials in (\ref{AmodelFg}).
The boundary conditions at the conifold locus (CON) determined by $\Delta_{j}(z)=0$ $(1\le j \le m)$ read
\begin{eqnarray}\label{bdyconditionatCON}
\lim_{\textrm{CON}} \mathcal{F}_{1}&=&-{1\over 12}\log t_{c}^{j} +\textrm{regular~function}\nonumber\,,\\
\lim_{\textrm{CON}} (\phi_{\textrm{CON},j}^0)^{2g-2} \mathcal{F}_{g}&=&{(c_j)^{g-1}|B_{2g}|\over 2g(2g-2)(t_{\textrm{CON}, j})^{2g-2}}+\textrm{regular~function},\quad g\geq 2 \label{gap}\,,
\end{eqnarray}
where
$\phi^{0}_{\textrm{CON},j}$ and $t_{\textrm{CON}, j}=\phi^{1}_{\textrm{CON},j}/\phi^{0}_{\textrm{CON},j}$ are the regular period and the normalized vanishing period $\phi^{1}_{c,j}$ near the conifold locus $\Delta_{j}=0$ respectively,
and $c_j$ is a constant independent of genus $g$.
The condition in (\ref{gap}) is often called the gap condition due to the fact that the sub-leading terms are vanishing \cite{HK1, HKQ}.\\

In a good situation and for small $g$, these boundary conditions suffice to determine the holomorphic ambiguity $f_{g}$ and thus $\mathcal{F}_{g}$ to a large extent (see \cite{HKQ,HK} and references therein).


\section{Examples}  \label{Examples}
In this section we shall review mirror symmetry of some compact and non-compact Calabi-Yau threefold families.

\subsection{Quintic Threefold}

Consider the Dwork pencil of quintic threefolds for $\psi \in \mathbb{C}$:
\begin{equation*}
X^\vee_{\psi}:=\{x_{1}^{5}+x_{2}^{5}+x_{3}^{5}+x_{4}^{5}+x_{5}^{5}-\psi x_{1}x_{2}x_{3}x_{4}x_{5}=0\} \subset \mathbb{P}^4.
\end{equation*}
The mirror manifold $X_{\psi}$ is obtained as a crepant resolution of the orbifold
$$
X_{\psi}:=X^\vee_{\psi}/G, \ \ \ \psi \in \mathbb{C},
$$
where
$$
G=\left\{(a_i)\in (\mathbb{Z}_5)^5 \ | \ \sum_{i=1}^5a_i=0 \right\}/\mathbb{Z}_5 \cong (\mathbb{Z}_5)^3.
$$
We refer the reader to \cite{Gre,CdOGP} for details.
The Picard--Fuchs equation of the mirror family reads
\begin{equation}
\left(\theta^{4}-5^{5}z (\theta+{1\over 5})(\theta+{2\over 5})(\theta+{3\over 5})(\theta+{4\over 5}) \right)\phi(z)=0, \label{PF eq}
\end{equation}
where $z=(5\psi)^{-5}$ and $\theta=z{\partial\over \partial z}$.
By the Griffiths transversality, we have
$$
z^{3}C_{zzz}=-\int_{X}\Omega\wedge \theta^{3}\Omega.
$$
Again by using the Griffiths transversality and Picard--Fuchs equation (\ref{PF eq}), we obtain
\begin{eqnarray*}
\theta (z^{3}C_{zzz})
&=&-\int_{X}\theta \Omega\wedge \theta^{3}\Omega-\int_{X}\Omega\wedge \theta^{4}\Omega\\
&=&-\theta  \int_{X}\theta \Omega\wedge \theta^{2}\Omega+\int_{X}\theta^{2} \Omega\wedge \theta^{2}\Omega-\int_{X}\Omega\wedge \left({2 \cdot5^{5}z\over 1-5^{5}z} \theta^{3}\Omega+\cdots\right)\\
&=&-\theta\left( \theta \int_{X}\Omega\wedge \theta^{2} \Omega-\int_{X} \Omega\wedge \theta^{3} \Omega \right)\\
&&
+\int_{X}\theta^{2} \Omega\wedge \theta^{2}\Omega-\int_{X}\Omega\wedge \left( {2\cdot 5^{5}z\over 1-5^{5}z} \theta^{3}\Omega+\cdots\right)\\
&=& -\theta (z^{3}C_{zzz}) -{2\cdot 5^{5}z\over 1-5^{5}z}(z^{3}C_{zzz}).
\end{eqnarray*}
Solving for $z^{3}C_{zzz}$ from this first order differential equation, we get
\begin{equation*}
C_{zzz}={c\over z^{3}(1-5^{5}z)}\,,
\end{equation*}
for some constant $c$.
Near the large complex structure limit $z=0$, the special coordinate $t(z)$ is an infinite series in $z$ computed from the periods
$\phi^{0}(z)\sim \mathrm{regular}, \ \phi^{1}(z)\sim \log z+\cdots$.
Mirror symmetry then predicts that under the mirror map $t(z)=\phi^{1}(z)/\phi^{0}(z)$, we should have as in (\ref{normalizedYukawa}):
\begin{equation*}
K_{ttt}=\phi^{0}(z)^{-2} ({\partial z\over \partial t})^{3}{c\over z^{3}(1-5^{5}z)}\,.
\end{equation*}
Comparing the asymptotic behaviors of both sides as $z\rightarrow 0$ or equivalently $t\mapsto \sqrt{-1} \infty$, we find $c=5$.
Thus we can determine the $g=0$ Gromov--Witten invariants by comparing the $q$-series expansions, where $q=e^{2\pi \sqrt{-1} t(z)}$. \\

Genus one mirror symmetry was worked out in \cite{BCOV} by using the holomorphic anomaly equation for $\mathcal{F}_{1}$.
The solution is given by the formula in (\ref{soltogenus1hae})
\begin{equation}\label{F1formirrorquintic}
\mathcal{F}_{1}={1\over 2}\log (G^{z\bar{z}}e^{K(4-{\chi(X^\vee)\over 12})})+ {1\over 2}|\log z^{b}(1-5^{5}z)^{a}|^{2}\,,
\end{equation}
for some constants $a,b$.
To fix these constants, we use the boundary conditions for $\mathcal{F}_{1}$ at the LCSL $z=0$ and at the conifold point $z=1/5^{5}$.
The latter implies that $a=-1/6$.
The former says that in the holomorphic limit at the LCSL,
from (\ref{bdyconditionatLCSL}) we obtain
\begin{equation}\label{hollimitatLCSL}
\lim_{\mathrm{LCSL}}\mathcal{F}_1=-\frac{t}{24} \int_{X^\vee}c_{2}(X^\vee)\cup H+O(e^{2\pi \sqrt{-1} t})\,,
\end{equation}
where $H$ is the hyperplane class of $X^\vee$.
To compute the holomorphic limit of the quantities involved in $\mathcal{F}_{1}$
at the large complex structure limit, we use the results discussed in Section \ref{sectionhollimits}.
According to the asymptotic behaviors
\begin{equation*}
\phi^{0}(z)=1+O(z),\quad t(z)={\phi^{1}(z)\over \phi^{0}(z)}=\log z+O(z)\,.
\end{equation*}
and using the formulas in (\ref{hollimitsofconnections}) we get the following asymptotic behaviors of the holomorphic limits
\begin{equation*}
G_{z\bar{z}}\sim {\partial t\over \partial z}\sim {1\over z}, \quad K_{z}\sim -\partial_{z}\log \phi^{0}(z)=\mathrm{regular~ function}\,.
\end{equation*}
Comparing the asymptotic behaviors of both sides in (\ref{hollimitatLCSL}), we get
\begin{equation*}\label{eqnforamirrorquintic}
{1\over 2}+{b\over 2}=-{1\over 24}\int_{X^\vee}c_{2}(X^\vee)\cup H\,.
\end{equation*}
In the current case, we have $\chi(X^\vee)=-200$ and $\int_{X^\vee}c_{2}(X^\vee)\cup H=50$ and thus we get the full solution
\begin{equation*}
\mathcal{F}_1={1\over 2}\log (G^{z\bar{z}}e^{{62\over 3}K})+ {1\over 2}|\log z^{-{31\over 6}}(1-5^{5}z)^{-{1\over 6}}|^{2}.
\end{equation*}
By using the mirror map and the holomorphic limit for $G_{z\bar{z}}$, we can then write the holomorphic limit\footnote{Computationally, for genus one amplitude,
we need to take its derivative to get rid of the anti-holomorphic terms. Also the generating function of genus one Gromov-Witten invariants with one insertion, which is given by the first derivative of $\mathcal{F}_{1}$, is more natural due to stability reasons.}:
\begin{equation*}
\partial_{t}{\tt F}_{1}(t)=-{1\over 2}\partial_{t}\log {\partial t\over \partial z}
-{31\over 3}\partial_{t}\log \phi^{0}(z)+ {1\over 2}\partial_{t}\log z^{-{31\over 6}}(1-5^{5}z)^{-{1\over 6}}.
\end{equation*}
We refer the reader to \cite{FLY, Zin} for mathematical proofs of this formula.
Comparing it with the expected form obtained from (\ref{AmodelFg}), we get the $g=1$ Gromov--Witten invariants.\\

Genus two case is much more involved than the above two case, but was worked out in \cite{BCOV2}.
The result is given by the formula in (\ref{soltogenus2hae}), with the holomorphic ambiguity $f_2(z)$
\begin{equation*}
f_2(z)=-{71375\over 288}-{10375\over 288}{1\over (1-5^{5}z)}
+{625\over 48}{1\over (1-5^{5}z)^{2}}\,.
\end{equation*}
The propagators $S^{ij},S^{i},S$ can be solved explicitly from the equations in (\ref{defofpropagators}) that they satisfy \cite{BCOV2}.
This determines the $g=2$ Gromov--Witten invariants in the same manner as above.\\

For higher genus $\mathcal{F}_{g}$, the non-holomorphic part is a polynomial in the propagators and the K\"ahler derivatives
which can be solved genus by genus recursively \cite{YY,AL}, as mentioned in Section 4.3.
The holomorphic ambiguities can be fixed by using the boundary conditions up to genus $51$ \cite{HKQ}.

\subsection{Local $\mathbb{P}^{2}$}

The holomorphic anomaly equations also apply to non-compact Calabi--Yau threefolds.
Let us consider the Calabi--Yau threefold $X^\vee=K_{\mathbb{P}^{2}}$, the total space of the canonical bundle of $\mathbb{P}^{2}$.
By varying the K\"ahler structure of $X^\vee$, we get a family $\mathcal{X}^\vee \to \mathcal{N}$.
The mirror family is constructed by following the lines in \cite{Chi} using Batyrev toric duality \cite{Bat}, or the Hori--Vafa construction \cite{HV}.
For definiteness, we will display the equation for the mirror family $\mathcal{X}\to \mathcal{M}$ obtained by the Hori--Vafa method
\begin{equation*}
uv-H(y_{1},y_{2};z)=0,\quad  (u,v,y_{1},y_{2})\in \mathbb{C}^{2}\times (\mathbb{C}^{*})^{2}\,,
\end{equation*}
where $H(y_{1},y_{2};z)=y_{1}y_{2}(-z+y_{1}+y_{2})+1$ and $z$ is the parameter for the base $\mathcal{M}$. 
It is a conic bundle over $(\mathbb{C}^\times)^2$ which degenerates along the so-called mirror curve $\{H(y_{1},y_{2};z)=0\}$. 
The mirror family $\mathcal{X}\to \mathcal{M}$ comes with the following Picard-Fuchs equation:
\begin{equation*}
\left(\theta^{3}-27z\theta (\theta+{1\over 3})(\theta+{2\over 3})\right)\phi=0.
\end{equation*}

Near the LCSL, given by $z=0$, there are three solutions of the form
\begin{equation*}
\phi^{0}(z)=1,\quad \phi^{1}(z)=\log z+\cdots,\quad \phi^{2}(z)=(\log z)^{2}+\cdots
\end{equation*}
and the mirror map is provided by $t(z)=\phi^{1}(z)/\phi^{0}(z)$.
As in the quintic case, the Yuwaka coupling can be solved from the Picard-Fuchs equation:
\begin{equation*}
C_{zzz}={\kappa\over z^{3}(1-27z)},
\end{equation*}
where $\kappa=-{1\over 3}$ is the classical triple intersection number of $X^\vee$.
The normalized Yukawa coupling in the $t$ coordinate is then
\begin{equation*}
K_{ttt}=(\phi^{0}(z))^{-2}({\partial z\over \partial t})^{3}{\kappa\over z^{3}(1-27z)}=-{1\over 3}{ (\theta t)^{3}\over 1-27z}\,.
\end{equation*}
From (\ref{soltogenus1hae}), the genus one amplitude is of the form
\begin{equation*}
\mathcal{F}_{1}={1\over 2}\log (G^{z\bar{z}}e^{K(4-{\chi(X^\vee)\over 12})})+ {1\over 2}|\log z^{b}(1-27z)^{a}|^{2}.
\end{equation*}
The constant $a$ is solved from the gap condition at the conifold point $z=1/27$ and turns out to be $a=-1/6$.
The constant $b$ has to satisfy the boundary condition at the LCSL given by
\begin{equation*}
{1\over 2}+{1\over 2}b=-{1\over 24}\int_{X^\vee} c_{2}(X^\vee)\cup H.
\end{equation*}
In the current case, we know $\chi{^\vee}=\chi(\mathbb{P}^{2})=3$ and $\int_{X^\vee} c_{2}(X^\vee)\cup H=2$ and thus we get at genus one
\begin{equation*}
\mathcal{F}_{1}={1\over 2}\log (G^{z\bar{z}}e^{{15\over 4}K})+ {1\over 2}|\log z^{-{7\over 6}}(1-27z)^{-{1\over 6}}|^{2}\,.
\end{equation*}
In the current non-compact case, we have the holomorphic limit by using (\ref{hollimitsofconnections})
\begin{equation*}
G_{z\bar{z}}\sim {\partial t\over \partial z},\quad K_{z}\sim -\partial_{z}\log \phi^{0}(z)=0\,.
\end{equation*}
Therefore we obtain
\begin{equation*}
\partial_{t}{\tt F}_1(t)=-{1\over 2}\partial_{t}\log {\partial t\over \partial z}
+ {1\over 2}\partial_{t}\log z^{-{7\over 6}}(1-5^{5}z)^{-{1\over 6}}\,.
\end{equation*}
The higher genus topological string amplitudes are more involved but can be worked out in a similar manner \cite{KZ2}.


\end{document}